\newif\iflabels
\newcommand{\be}{\begin{equation}}
\newcommand{\ee}{\end{equation}}
\numberwithin{equation}{section}
\theoremstyle{plain}                
\newtheorem{theorem}{Theorem}[section]
\newtheorem{lemma}[theorem]{Lemma}
\newtheorem{proposition}[theorem]{Proposition}
\newtheorem{corollary}[theorem]{Corollary}
\theoremstyle{definition}           
\newtheorem{definition}[theorem]{Definition}
\newtheorem{assumption}[theorem]{Assumption}
\theoremstyle{remark}
\newtheorem{remark}[theorem]{Remark}
\providecommand{\alias}{}
\renewcommand{\alias}[1]{\providecommand{#1}{}\renewcommand{#1}}
  \DeclarePairedDelimiter\ab{\langle}{\rangle} 
  \DeclarePairedDelimiter\abs{\lvert}{\rvert}   
  \DeclarePairedDelimiter\norm{\lVert}{\rVert}  
  \DeclarePairedDelimiterX\set[1]\{\}{ #1 }
  \DeclarePairedDelimiterX\sets[2]\{\}{ #1\,:\,#2 }
  \let\bPeexp\exp
  \let\exp\relax
  \DeclarePairedDelimiterXPP\exp[1]{\bPeexp}(){}{#1}
    \let\oldabs\abs \def\abs{\@ifstar{\oldabs}{\oldabs*}}
    \let\oldab\ab \def\ab{\@ifstar{\oldab}{\oldab*}}
    \let\oldnorm\norm \def\norm{\@ifstar{\oldnorm}{\oldnorm*}}
    \let\oldexp\exp \def\exp{\@ifstar{\oldexp}{\oldexp*}}
  \newcommand{\opnorm}{\@ifstar\@opnorms\@opnorm}
  \newcommand{\@opnorm}[2][]{%
    \mathopen{#1|\mkern-1.5mu#1|\mkern-1.5mu#1|}
    #2
    \mathclose{#1|\mkern-1.5mu#1|\mkern-1.5mu#1|}
  }
\alias{\R}{{\mathbb R}}
\alias{\C}{{\mathbb C}}
\alias{\Z}{{\mathbb Z}}
\alias{\N}{{\mathbb N}}
\newcommand{\sA}{\mathcal{A}}
\newcommand{\sS}{\mathcal{S}}
\newcommand{\sF}{\mathcal{F}}
\newcommand{\bP}{\mathbb{P}}
\newcommand{\bE}{\mathbb{E}}
\newcommand{\bQ}{\mathbb{Q}}
\newcommand{\kZ}{\kZ}
\newcommand{\lpee}{L^p}
\newcommand{\linf}{L^{\infty}}
\newcommand{\loi}{L^{1,\infty}}
\newcommand{\sinf}{\sS^{\infty}}
\newcommand{\spee}{\sS^p}
\newcommand{\bmo}{\text{bmo}}
\newcommand{\const}[1]{C=C(#1)}
\newcommand{\leqc}{\leq_C}
\newcommand{\apb}{\mathbf{A}}
\newcommand{\sam}{\set{a_m}}
\newcommand{\calpha}{C^{\alpha}}
\newcommand{\cbeta}{C^{\beta}}
\newcommand{\tr}{\text{tr}}
\newcommand{\all}{[0,T] \times \R^d \times \R^n \times (\R^d)^n}
\newcommand{\cdiag}{C_{\text{diag}}}
\newcommand\blfootnote[1]{%
  \begingroup
  \renewcommand\thefootnote{}\footnote{#1}%
  \addtocounter{footnote}{-1}%
  \endgroup
}
\begin{document}
\title[Quadratic FBSDE systems and stochastic differential games]{Global existence for quadratic FBSDE systems and application to stochastic differential games}
\author{Joe Jackson}
\address{Department of Mathematics, The University of Texas at Austin}
\email{jjackso1@utexas.edu}
\thanks{During the preparation of this work the first author has been supported by the National Science Foundation under Grant No. DGE1610403 (2020-2023). Any opinions, findings and conclusions or recommendations expressed in this material are those of the author(s) and do not necessarily reflect the views of the National Science Foundation (NSF).
}

\begin{abstract}
   In this note, we use Girsanov's Theorem together with results from the quadratic BSDE literature to construct global strong solutions for quadratic FBSDE systems. Then, we identify a general class of stochastic differential games whose corresponding FBSDE systems are covered by our main existence result. This leads to the existence of Markovian Nash equilibria for such games. 
\end{abstract}

\maketitle

\section{Introduction}
\blfootnote{The author wishes to thank Daniel Lacker and Ludovic Tangpi for helpful comments on an early version of this note.}
Recent years have witnessed much activity and progress in the area of quadratic BSDE systems, i.e. systems of backward stochastic differential equations (BSDEs) whose driver $f$ has quadratic growth in the control variable, typically denoted $z$. In the Markovian case, the most general global existence results appear in \cite{xing2018}, while in the non-Markovian case global existence is obtained under various structural conditions in \cite{HuTan16}, \cite{Nam19}, and \cite{jackson2021existence}. Fewer efforts have been made to understand quadratic systems of forward-backward stochastic differential equations (FBSDEs), possibly because existence for general FBSDEs is a very challenging problem even when all coefficients are Lipschitz. The works we are aware of which consider quadratic FBSDE systems are \cite{Antonelli2006ExistenceOT}, \cite{fromm2013existence}, \cite{kupperluo} and \cite{Luo2017SolvabilityOC}, which all require either smallness or some type of monotonicity condition.

In this note, we consider the FBSDE
\begin{align} \label{fbsde}
    \begin{cases} dX_t = b(t,X_t,Y_t,Z_t) dt + \sigma(t,X_t) dB_t, \\
    dY_t = - f(t,X_t,Y_t,Z_t) dt + Z_t dB_t, \,\,
    Y_T = g(X_T).
    \end{cases}
\end{align}
We are particularly interested in the case that $Y$ is multidimensional and the driver $f = f(t,x,y,z)$ exhibits quadratic growth in the variable $z$. In particular, the first objective of this note is to extend the global existence results for quadratic BSDE systems obtained in \cite{xing2018} to the quadratic FBSDE \eqref{fbsde}. The FBSDE \eqref{fbsde} is related, at least formally, to two other analytical objects: the BSDE 
\begin{align} \label{bsde}
    \begin{cases} 
    dX_t = \sigma(t,X_t) dB_t, \\
    dY_t^i = -F^i(t,X_t,Y_t,Z_t) dt + Z_t dB_t, \,\, Y_T = g(X_T)
    \end{cases}
\end{align}
where here an throughout the paper we use the convention
\begin{align} \label{Fdef}
    F^i(t,x,y,z) := \big(f^i(t,x,y,z) + z^i \cdot \sigma^{-1}(t,x)b(t,x,y,z)\big)
\end{align}
and, setting $a = \frac{1}{2} \sigma \sigma^T$, the partial differential equation (PDE) 
\begin{align} \label{pde} 
    \partial_t u^i + \tr(a(t,x) D^2 u^i) + f^i(t,x,u,  Du \sigma ) + D u^i \cdot b(t,x,u, Du \sigma) = 0, \, \, u^i(T,x) = g^i(x).
\end{align}

\subsection{Main results.}
The first contribution of this note is an existence result (Theorem \ref{thm.exist}) for \eqref{fbsde} when $f$ exhibits quadratic growth in $z$ but satisfies the structural conditions \eqref{hyp.ab} and \eqref{hyp.bf}, and the data $\sigma$, $b$, and $g$ satisfy some minimal regularity conditions. The proof relies on a sequence of a-priori estimates. Together with a somewhat standard approximation procedure, these a-priori estimates allow us to produce a solution to \eqref{fbsde} through a compactness argument. The first a-priori estimate is Lemma \ref{lem.ab}, which shows that the structural condition \eqref{hyp.ab} leads to $\linf$ estimates on the decoupling field (see Definition \ref{def.decoup}) of \eqref{fbsde}. We emphasize that \eqref{hyp.ab} is only a convenient condition to guarantee a-priori estimates in $\linf$; if such a-priori estimates are established through another method the rest of the analysis goes through unchanged. The second important estimate is Proposition \ref{prop.holder}, a H\"older estimate which follows more or less directly from a result of \cite{xing2018}, thanks to the fact that if the driver $f$ satisfies the structural condition \eqref{hyp.bf}, then so does the driver $F$ given in \eqref{Fdef}. The final estimate is Proposition \ref{prop.calpha}, which shows that an estimate on the H\"older regularity of a solution to \eqref{pde} yields an interior $C^{1,\alpha}$ estimate. This estimate allows us to construct a Markovian solution to the BSDE \eqref{bsde} which is regular enough to also be a decoupling field for the FBSDE \eqref{fbsde}. We emphasize that we require very little regularity of the driver $f$, to be obtain our estimates and existence result, in partiular $f$ need not be even locally Lipschitz in $(y,z)$.

The second contribution is to apply our results to a class of stochastic differential games. Typically, quadratic BSDE systems arise when stochastic differential games (with uncontrolled drift and quadratic costs) are treated through the popular weak formulation. But if the same games are treated in strong formulation, then a quadratic FBSDE arises in place of the quadratic BSDE - roughly speaking, in order to find a Markovian Nash equilibrium, one must solve \eqref{fbsde} in place of \eqref{bsde}. We emphasize that in this approach the FBSDE involved is not the one obtained from the stochastic maximum principle, but the one which represents the value of the game. We make this connection between Markovian Nash equilibria and FBSDEs precise under fairly general conditions in Proposition \ref{pro:fbsdeclne}. Then, we identify a general class of stochastic differential games whose corresponding FBSDEs have a structure covered by Theorem \ref{thm.exist}. These games are characterized by a diagonal cost structure (player $i$'s control does not enter player $j$'s running cost, when $i \neq j$) and a drift $b = b(t,x,a_1,...,a_n)$ which decomposes additively as $b(t,x,a_1,...,a_n) = \sum_{j = 1}^n b_j(t,x,a_j)$ (see Section 4.1 for notation). This leads to an existence result for Markovian Nash equilibria, which is stated precisely in Proposition \ref{prop:clneexist}.

\subsection{Preliminaries and notations}
The dimensions $n$ and $d$ are fixed throughout the paper, as is the terminal time $T \in (0,\infty)$. We also fix throughout the paper a probability space $(\Omega, \sF, \bP)$ hosting a $d$-dimensional Brownian motion $B$, whose augmented filtration is denoted by $\mathbb{F} = (\sF_t)_{0 \leq t \leq T}$. We use the usual notation $\lpee$, $1 \leq p \leq \infty$ for the space of $p$-integrable $\sF_T$-measurable random variables with norm $\norm{\cdot}_{\lpee}$. For a continuous and adapted process $Y$ taking values in some Euclidean space, we define $\norm{Y}_{\spee} = \norm{\sup_{0 \leq t \leq T} |Y_t|}_{\lpee}$, and we write $\bmo$ for the set of all adapted processes $Z$ such that $\norm{Z}_{\bmo} = \sup_{\tau} \bE_{\tau}[\int_{\tau}^T |Z_s|^2 ds] < \infty$, the supremum being taken over all stopping times $ 0 \leq \tau \leq T$ and $\bE_{\tau}[\cdot]$ denoting condition expectaition with respect to $\sF_{\tau}$. Finally, we mention that all the spaces and norms here can be extended in natural ways to include processes defined only on $[t,T]$, for some $t \in [0,T]$. 

Let us mention that we will write $Dv$ for the spatial gradient of a map $v : [0,T] \times \R^d \to \R$, and for $u = (u^1,...,u^n) : [0,T] \times \R^d \to \R$, $Du$ will denote $(Du^1,...,Du^n)$, viewed as an element of $(\R^d)^n$. We will also view the unknown $Z$ appearing in \eqref{fbsde} (and \eqref{bsde}) as taking values in $(\R^d)^n$. We will manipulate elements of $(\R^d)^n$ in a natural ``element-wise way" as in \cite{jackson2021existence}, e.g. if $p \in (\R^d)^n$ and $q \in \R^{d \times d}$, then $pq$ denotes an element of $(\R^d)^n$ whose $i^{th}$ entry is $q p^i$. Likewise if $p \in (\R^d)^n$ and $q \in \R^d$, then $pq \in \R^n$ and $(pq)^i = p^i \cdot q$. This philosophy will in particular be used when interpreting the stochastic differential $Z_t dB_t$ and expressions like $Z_t \sigma(t,X_t)$.

We will be working with certain parabolic H\"older spaces, defined as follows. Fix $\alpha \in (0,1)$. For a function $v  = v(t,x) : [0,T] \times \R^d \to E$, $E$ being some Euclidean space with norm $| \cdot |$, we define the H\"older seminorm 
\begin{align*}
    [v]_{\calpha} = [v]_{\calpha([0,T] \times \R^d)} = \sup_{t \neq t', x \neq x'} \frac{|v(t,x) - v(t',x')|}{|t - t'|^{\alpha/2} + |x - x'|^{\alpha}}. 
\end{align*}
Next, we define $\norm{u}_{\calpha} = \norm{u}_{\linf} + [u]_{\calpha}$, and $\norm{u}_{C^{1,\alpha}} = \norm{u}_{\calpha} + \norm{Du}_{\calpha}$. Given an open sut $U \subset [0,T] \times \R^d$, we define $\norm{u}_{C^{\alpha}(U)}$ and $\norm{u}_{C^{1 + \alpha}(U)}$ similarly. We define the H\"older spaces of functions defined on $\R^d$ in the same way, i.e. for $g : \R^d \to E$, $
\norm{g}_{\calpha} = \sup_{x \neq x'} \frac{|g(x) - g(x')|}{|x - x'|}$.

At this point, we need to make precise the notions of solutions we will be working with.
\begin{definition} \label{def.decoup}
A pair of measurable functions $u : [0,T] \times \R^d \to \R^n$, $v : [0,T] \times \R^d \to  (\R^d)^n$ with $u$ bounded and continuous is called a \textbf{decoupling field} for \eqref{fbsde} if for each $t \in [0,T]$ and $x \in \R^d$, the SDE 
\begin{align} \label{sde}
    X_{t'}^{t,x} = x + \int_{t}^{t'} b(s,X_s^{t,x}, u(s,X_s^{t,x}), v(s,X_s^{t,x})) ds + \sigma(s,X^{t,x}_s)  dB_s
\end{align}
has a unique strong solution on $[t,T]$, and with $(Y^{t,x},Z^{t,x})$ defined by $(Y^{t,x},Z^{t,x}) = \big(u(\cdot,X^{t,x}), v(\cdot,X^{t,x})\big)$, the triple $(X^{t,x}, Y^{t,x}, Z^{t,x})$ solves the equation 
\begin{align} \label{fbsdedecoup}
   \begin{cases} X_{t'}^{t,x} = x + \int_{t}^{t'} b(s,X_s^{t,x},Y_s^{t,x},Z^{t,x}_s) ds + \int_{t}^{t'} \sigma(s,X_s^{t,x}) dB_s, \\
    Y_{t'}^{t,x} = g(X_T^{t,x}) + \int_{t'}^T f(s,X_s^{t,x},Y_s^{t,x}, Z^{t,x}_s) ds - \int_{t'}^T Z_s^{t,x} dB_s 
    \end{cases}
\end{align}
on the interval $[t,T]$. We call $(u,v)$ a $\bmo$ decoupling field if $Z^{t,x} \in \bmo$, for each $(t,x) \in [0,T] \times \R^d$. 

A pair of measurable functions $u : [0,T] \times \R^d \to \R^n$, $v : [0,T] \times \R^d \to (\R^d)^n$ with $u$ bounded and continuous is called a
\textbf{Markovian solution} to \eqref{bsde} if for each $t \in [0,T]$ and $x \in \R^d$, the pair $(Y^{t,x},Z^{t,x})$ defined by $(Y_{t'}^{t,x},Z_{t'}^{t,x}) \coloneqq \big(u(t',X^{t,x}_{t'}), v(t', X^{t,x}_{t'})\big)$ solves the BSDE
\begin{align} \label{bsdemarkov}
    Y^{t,x}_{t'} = g(X_T^{t,x}) + \int_{t'}^T F(s,X^{t,x}_s,Y^{t,x}_s,Z^{t,x}_s) ds - \int_{t'}^T Z^{t,x}_s dB_s
\end{align}
on the interval $[t,T]$,
where $X^{t,x}$ is the unique strong solution of 
\begin{align*}
    X^{t,x}_{t'} = x + \int_{t}^{t'} \sigma(s,X^{t,x}_s)  dB_s. 
\end{align*}
We call $(u,v)$ a $\bmo$ Markovian solution if $Z^{t,x} \in \bmo$ for each $(t,x) \in [0,T] \times \R^d$.
\end{definition}

\begin{remark}
We note that our definition of decoupling field differs from the usual one in that we include the function $v$ as part of the decoupling field. This is to make the relationship between the decoupling field for \eqref{fbsde} and the Markovian solution of \eqref{bsde} easier to state. Moreover, we note that the existence of a decoupling field for \eqref{fbsde} implies the existence, for any $x \in \R^d$, of a \textit{strong} solution to the equation 
\begin{align}  \label{fbsdestrong}
    \begin{cases}
        X_t = x + \int_0^t b(s,X_s,Y_s,Z_s) ds + \int_0^t \sigma(s,X_s)  dB_s, \\
    Y_t = g(X_T) + \int_t^T f(s,X_s,Y_s,Z_s) ds - \int_t^T Z_s  dB_s,
    \end{cases}
\end{align}
i.e. a pair of adapted processes $(Y,Z)$ satisfying \eqref{fbsdestrong} path-wise a.s. 
\end{remark}

The following is a consequence of It\^o's formula and the Girsanov transform.

\begin{proposition} \label{prop:basic}
Suppose that \ref{hyp.sigmab} and \ref{hyp.q} hold, and that $(u,v)$ is a $\bmo$ Markovian solution to \eqref{bsde} such $v$ is bounded on $[0,t] \times \R^d$ for any $t < T$. Then $(u,v)$ is also a $\bmo$ decoupling field for \eqref{fbsde}. Conversely, any $\bmo$ decoupling field is also a $\bmo$ Markovian solution of \eqref{bsde}.
\end{proposition}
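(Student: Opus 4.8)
The plan is to treat both implications as a single Girsanov change of measure that inserts or deletes the drift $b$ in the forward equation, with the algebraic identity \eqref{Fdef} accounting for the switch between the drivers $F$ and $f$. Fix $(t,x)$ and write $(Y,Z)=(u(\cdot,X),v(\cdot,X))$ for the relevant solution triple, where $X$ is driftless in the Markovian picture and drifted in the decoupling picture, and set $\theta_s:=\sigma^{-1}(s,X_s)\,b(s,X_s,Y_s,Z_s)$. The first step is to verify, from the growth of $b$ imposed in \ref{hyp.q}, the boundedness of $\sigma^{-1}$ in \ref{hyp.sigmab}, and the standing assumption $Z\in\bmo$, that $\int\theta\cdot dB$ is a $\bmo$ martingale; Kazamaki's theorem then makes $\mathcal{E}(\pm\int\theta\cdot dB)$ a uniformly integrable martingale, so the measure $\bQ\sim\bP$ with this density is well defined and $B^{\bQ}=B\mp\int_t^{\cdot}\theta_s\,ds$ is a $\bQ$-Brownian motion. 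Because the Girsanov kernel itself lies in $\bmo$, the same circle of results gives $\bmo(\bP)=\bmo(\bQ)$, so that $Z\in\bmo$ is preserved and the output is again a $\bmo$ object.

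The second step is the It\^o computation in each direction. Passing from the Markovian solution to $\bQ$, where $dB=dB^{\bQ}+\theta\,dt$, the forward equation becomes $dX=\sigma\theta\,dt+\sigma\,dB^{\bQ}=b\,dt+\sigma\,dB^{\bQ}$, while rewriting $Z^i\,dB$ in the backward equation produces the extra term $Z^i\!\cdot\!\theta\,dt=Z^i\!\cdot\!\sigma^{-1}b\,dt$, which cancels exactly the correction in $F^i=f^i+z^i\!\cdot\!\sigma^{-1}b$ and leaves $dY^i=-f^i\,dt+Z^i\,dB^{\bQ}$; thus $(X,Y,Z)$ solves \eqref{fbsdedecoup}. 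The reverse implication is the mirror image: removing the drift and using \eqref{Fdef} to turn $f$ back into $F$ produces a solution of \eqref{bsdemarkov}.

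The delicate step is to upgrade these weak statements to the strong, functional form demanded by Definition \ref{def.decoup}. In the direction ``decoupling field $\Rightarrow$ Markovian solution'' this is routine, because the driftless SDE $dX=\sigma\,dB$ is strongly well posed under \ref{hyp.sigmab}, so $X=\Phi(x,B^{\bQ})$ for a fixed measurable functional $\Phi$; since $B^{\bQ}$ under $\bQ$ and $B$ under $\bP$ are both standard Brownian motions, the pathwise identity \eqref{bsdemarkov} transfers to $(\Omega,\sF,\bP,B)$ verbatim. I expect the genuine obstacle to be the opposite direction, where one must exhibit a \emph{unique strong} solution of the drifted SDE \eqref{sde} whose drift $b(s,\cdot,u(s,\cdot),v(s,\cdot))$ is only bounded and measurable in $x$, and moreover only bounded away from $T$ since $v$ may blow up there. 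The change of measure supplies only a weak solution, so I would extract pathwise uniqueness from the nondegeneracy and regularity of $\sigma$ in \ref{hyp.sigmab} by a Zvonkin--Veretennikov-type argument on each interval $[t,T-\eps]$, use the $\bmo$ bound on $Z$ to pass to the limit $\eps\downarrow 0$, and then invoke Yamada--Watanabe to promote weak existence plus pathwise uniqueness to a strong solution $X=\Phi(x,B)$; the ``both driving processes are Brownian motions'' argument then carries \eqref{fbsdedecoup} over to $(\Omega,\sF,\bP,B)$, completing the proof.
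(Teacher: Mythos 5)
Your proposal is correct and follows essentially the same route as the paper: a Girsanov change of measure with $\bmo$ kernel plus the exact cancellation built into \eqref{Fdef} handles both implications, the drifted SDE is solved strongly on $[t,T-\eps]$ by the Veretennikov-type theory for bounded measurable drift (the paper cites this result directly, rather than reassembling it from Zvonkin pathwise uniqueness and Yamada--Watanabe, but it is the same classical ingredient), and the $\bmo$ bound on $Z$ is used to extend the forward process up to time $T$. One small slip worth fixing: the linear growth $|b(t,x,y,z)|\le C_0(1+|z|)$ that justifies the Girsanov step is item 3) of \ref{hyp.sigmab}, not part of \ref{hyp.q}; the role of \ref{hyp.q} (quadratic growth of $f$) is to make the driver integral in the backward equation well defined along a $\bmo$ trajectory.
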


\begin{proof}
Let us first assume that $(u,v)$ is a $\bmo$ decoupling field for \eqref{fbsde}. For fixed $t$ and $x$, let $X^{t,x}$ be defined by \eqref{sde} and set $(Y^{t,x}, Z^{t,x}) = \big(u(\cdot, X^{t,x}), v(\cdot, X^{t,x})\big)$. By the definition of decoupling field, we have the relationship
\begin{align*}
    Y_{t'}^{t,x} &= g(X_T^{t,x}) + \int_{t'}^T f(s,X_s^{t,x}, Y^{t,x}_s, Z_s^{t,x}) ds - \int_{t'}^T Z^{t,x}_s  dB_s \\
    &= g(X_T^{t,x}) + \int_{t'}^T  F(s,X_s^{t,x}, Y^{t,x}_s, Z_s^{t,x}) ds- \int_{t'}^T Z^{t,x}_s
     \big(\sigma^{-1}(s,X^{t,x}_s)  dX_s^{t,x}\big). 
\end{align*}
Recalling the definitions of $Y^{t,x}$ and $Z^{t,x}$, we find that 
\begin{align*}
    u(t',X^{t,x}_{t'}) = g(X_T^{t,x}) + \int_{t'}^T  F(s,X_s^{t,x}, u(s,X^{t,x}_s), v(s,X^{t,x}_s)) ds- \int_{t'}^T v(s,X^{t,x}_s)  \big(
    \sigma^{-1}(s,X^{t,x}_s)  dX_s^{t,x}\big).
\end{align*}
Now, if $\tilde{X}^{t,x}$ denotes the solution of 
$
    \tilde{X}^{t,x}_{t'} = x + \int_t^{t'} \sigma(s,\tilde{X}_s^{t,x})  dB_s, 
$
then since $Z^{t,x} \in \bmo$, and $|b(t,x,y,z)| \leq C_0(1 + |z|)$, Girsanov's Theorem yields a probability measure $\bQ$ such that the law of $\tilde{X}^{t,x}$ under $\bQ$ is the same as the law of $X^{t,x}$ under $\bP$. Thus the relationship
\begin{align*}
    u(t',\tilde{X}^{t,x}_{t'}) &= g(\tilde{X}_T^{t,x}) + \int_{t'}^T  F(s,\tilde{X}_s^{t,x}, u(s,\tilde{X}^{t,x}_s), v(s,\tilde{X}^{t,x}_s)) ds- \int_{t'}^T v(s,\tilde{X}^{t,x}_s)
    \big( \sigma^{-1}(s,\tilde{X}^{t,x}_s) d\tilde{X}_s^{t,x}\big) \\
    &= g(\tilde{X}_T^{t,x}) + \int_{t'}^T  F(s,\tilde{X}_s^{t,x}, u(s,\tilde{X}^{t,x}_s), v(s,\tilde{X}^{t,x}_s)) ds- \int_{t'}^T v(s,\tilde{X}^{t,x}_s)
     dB_s
\end{align*}
holds under the measure $\bQ$, hence also under $\bP$. Thus $(u,v)$ is a Markovian solution to \eqref{bsde}. 

Now suppose that $(u,v)$ is a $\bmo$ Markovian solution to \eqref{bsde}, with $v$ bounded on $[0,T-\epsilon]$ for any $\epsilon > 0$. Then for any $(t,x)$ and $\epsilon > 0$, the SDE \eqref{sde} has a unique strong solution on $[t,T)$, thanks to a classical result which can be traced to Veretennikov (see \cite{ZHANG20051805} and the references therein for more information about the solvability of SDEs with irregular drift). The fact that $v$ is a $\bmo$-decoupling field implies a bound on the process $
    \tilde{b}_s \coloneqq b(s,X^{t,x}_s, u(s,X^{t,x}_s), v(s,X^{t,x}_s))$
in, say, $L^2(\Omega \times [t,T))$. Together with the boundedness of $\sigma$, this implies easily that a.s., $X^{t,x}_s$ has a limit as $s \to \infty$, which lets us extend $X^{t,x}$ uniquely to all of $[t,T]$. Now we set $(Y^{t,x},Z^{t,x}) = (u(\cdot,X^{t,x}),v(\cdot, X^{t,x}))$. Checking that $(Y^{t,x}, Z^{t,x})$ solves \eqref{fbsdedecoup} amounts to running the above change-of-measure argument in reverse. 
\end{proof}

\subsection{Assumptions}
We now describe some conditions on the data, which consists of measurable maps
\begin{align*}
    &b = b(t,x,y,z) : \all \to \R^d, \quad \sigma = \sigma(t,x) : [0,T] \times \R^d \to \R^{d \times d}, \\
    &f = f(t,x,y,z) : \all \to \R^n, \quad g = g(x) : \R^d \to \R^n
\end{align*}
which we will later impose in various combinations in order to get estimates and existence results. We start with the conditions on $\sigma$ and $b$ which will be used throughout the paper.
\be \label{hyp.sigmab} \tag{$H_{0}$}
\begin{cases}
\text{There exists a constant $C_{0}$ such that $\sigma$ and $b$ satisfy } \\ 
\hspace{.5cm} 1) \,\, \frac{1}{C_{0}} |w|^2 \leq |\sigma(t,x) w|^2 \leq C_{0} |w|^2, \\
\hspace{.5cm} 2) \,\, |\sigma(t,x) - \sigma(t,x')| \leq C_{0} |x - x'|, \\
\hspace{.5cm} 3) \,\, |b(t,x,y,z)| \leq C_0 (1 + |z|) \\
\text{for all } t \in [0,T], x,x',w \in \R^d, z \in (\R^d)^n
\end{cases}
\ee
The next condition will be used to guarantee an a-priori estimate on $\norm{Y}_{\sinf}$ for the equation \eqref{fbsde}, provided that the terminal condition is bounded (see Lemma \ref{lem.ab}). 
\be \label{hyp.ab} \tag{$H_{AB}$} \begin{cases}
\text{There exists a constant $\rho$ and a finite collection $\sam = (a_1,\dots, a_M)$}  \\ \text{of
  vectors in $\R^n$ such that $a_1,\dots, a_M$ positively span $\R^n$, and } \\ a_m^T f(t,x,y,z) \leq \rho + \tfrac{1}{2} \abs{a_m^T z}^2 \text{ 
    for each $m$, for all $(t,x,y,z) \in \all$.}
\end{cases}
\ee 
The next condition states that the driver $f$ has quadratic growth in $z$. 
\be \label{hyp.q} \tag{$H_Q$}
\begin{cases}
\text{There exists a constant $C_Q$ such that the estimate} \\
\hspace{.5cm} |f^i(t,x,y,z)| \leq C_Q(1 + |z|^2) \\
\text{holds for each } (t,x,y,z) \in \all, \,\, i = 1,...,n.
\end{cases}
\ee
It is well-known that a quadratic growth assumption like \ref{hyp.q} is not enough to obtain regularity estimates on the PDE system \eqref{pde}, so we will impose the following structural condition. The condition can be traced back to \cite{bengame}, and a similar condition appeared in \cite{xing2018}, where it was termed the Bensoussan-Frehse condition. 
\be \label{hyp.bf} \tag{$H_{BF}$}
\begin{cases}
\text{There exists a constant $C_Q$ and a sub-quadratic function $\kappa : \R_+ \to \R_+$ such that } \\
\hspace{.5cm} |f^i(t,x,y,z)| \leq C_Q\big(1 + |z^i||z| + \sum_{j < i} |z^j|^2 + \kappa(|p|) \big) \\
\text{for all } (t,x,y,z) \in \all, \,\, i = 1,...,n. 
\end{cases}
\ee
\section{A-priori estimates and existence}

\begin{lemma} \label{lem.ab}
Let $(u,v)$ be a Markovian solution of \eqref{bsde}, and suppose that \ref{hyp.ab} holds. Then we have 
\begin{align*}
\norm{u}_{\linf} \leq C, \,\, \const{\norm{g}_{\linf}, \rho, \sam}. 
\end{align*}
\end{lemma}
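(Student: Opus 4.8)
The plan is to prove one-sided bounds $a_m^{T} u \le K_m$ for each $m$, and then leverage the positive-spanning hypothesis to turn these into a two-sided bound on $u$. First I would fix $(t,x)$ and abbreviate $X = X^{t,x}$, $Y = u(\cdot,X)$, $Z = v(\cdot,X)$, so that by Definition \ref{def.decoup} the pair $(Y,Z)$ solves \eqref{bsdemarkov} on $[t,T]$ with driver $F$. Fixing $m$, I introduce the scalar process $w_s = a_m^{T} Y_s$ and the $\R^d$-valued process $\zeta_s = a_m^{T} Z_s$, which satisfy
\begin{align*}
dw_s = -a_m^{T} F(s,X_s,Y_s,Z_s)\, ds + \zeta_s \cdot dB_s.
\end{align*}
The crucial algebraic point is that, by the definition \eqref{Fdef} of $F$ and the element-wise conventions, $a_m^{T} F = a_m^{T} f + \zeta \cdot \sigma^{-1} b$, so the forward drift $b$ enters $a_m^{T}F$ only through a term linear in $\zeta$.

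Next I would remove this linear term by a Girsanov change of measure. Since $\sigma^{-1}$ is bounded and $|b| \le C_0(1+|z|)$ by \ref{hyp.sigmab}, the process $\sigma^{-1}(\cdot,X)\,b(\cdot,X,Y,Z)$ lies in $\bmo$ (because $Z \in \bmo$), so its stochastic exponential is a genuine martingale and defines $\bQ \sim \bP$ under which $\tilde{B}_s = B_s - \int_t^s \sigma^{-1} b\, dr$ is a Brownian motion; under $\bQ$ the dynamics read $dw_s = -a_m^{T} f\, ds + \zeta_s \cdot d\tilde{B}_s$, i.e. the driver is now $f$. (This is precisely the change of measure behind Proposition \ref{prop:basic}.) Applying It\^o's formula to $\Phi_s = e^{\rho s} e^{w_s}$ and invoking \ref{hyp.ab} in the form $a_m^{T} f \le \rho + \tfrac{1}{2}|\zeta|^2$,
\begin{align*}
d\Phi_s = e^{\rho s} e^{w_s}\big(\rho - a_m^{T} f + \tfrac{1}{2}|\zeta_s|^2\big)\, ds + e^{\rho s} e^{w_s}\, \zeta_s \cdot d\tilde{B}_s \ge e^{\rho s} e^{w_s}\, \zeta_s \cdot d\tilde{B}_s,
\end{align*}
so $\Phi$ is a $\bQ$-submartingale whose martingale part is a true $\bQ$-martingale (the integrand is bounded since $u$ is bounded, and $\zeta \in \bmo(\bQ)$). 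Taking $\bE^{\bQ}_{t'}$ and using $w_T = a_m^{T} g(X_T) \le |a_m|\,\norm{g}_{\linf}$ gives $a_m^{T} Y_{t'} = w_{t'} \le \rho(T-t') + |a_m|\,\norm{g}_{\linf}$.

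Evaluating at $t' = t$ then yields $a_m^{T} u(t,x) \le \rho T + |a_m|\,\norm{g}_{\linf} =: K_m$ for every $m$ and every $(t,x)$. Because $a_1,\dots,a_M$ positively span $\R^n$, any unit vector $e$ admits a representation $-e = \sum_m \lambda_m a_m$ with $\lambda_m \ge 0$, whence $e \cdot u(t,x) = -\sum_m \lambda_m\, a_m^{T} u(t,x) \ge -\sum_m \lambda_m K_m$; applying this to both $e$ and $-e$ (with the coefficients $\lambda_m$ controlled uniformly over the compact set of unit vectors) produces a two-sided bound $|u(t,x)| \le C$ with $\const{\norm{g}_{\linf},\rho,\sam}$, uniformly in $(t,x)$, which is the assertion.

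I expect the main obstacle to be the integrability bookkeeping around the Girsanov step: one must verify that the exponential density and the stochastic integral in the submartingale estimate are true martingales rather than merely local ones, and that the final constant does not secretly depend on the a priori only qualitative bound on $u$. Both points follow from the $\bmo$ property of $Z$ together with \ref{hyp.sigmab}; if one wishes to assume only that $u$ is bounded, the argument should instead be localized by stopping times $\tau_k \uparrow T$ and the limit taken by dominated convergence, using the qualitative boundedness of $w$ to dominate $\Phi$ while keeping the estimate independent of it.
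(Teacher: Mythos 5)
Your proof is correct and takes essentially the same route as the paper's: a Girsanov change of measure to trade the driver $F$ for $f$, an exponential transform of $a_m^T Y$ whose drift is nonnegative by \ref{hyp.ab}, the submartingale inequality against the bounded terminal value $a_m^T g(X_T)$, and finally the positive-spanning property to convert the one-sided bounds into $\norm{u}_{\linf} \leq C$. The only differences are cosmetic or in your favor: the paper uses the transform $\exp*{2a_m^T Y + 2\int_0^\cdot \rho\, dt}$ where you use $e^{\rho s + a_m^T Y_s}$, and you are more explicit than the paper about checking that the stochastic integral is a true (not merely local) martingale via the $\bmo$ property and about running the argument for every initial pair $(t,x)$.
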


\begin{proof}
Dropping the superscripts, we define $X$ to be the solution of the equation 
$
    X_t = \int_0^t \sigma(s,X_s) dB_s. 
$
We then set $(Y,Z) = (u(\cdot, X), v(\cdot, X))$. Since $(u,v)$ is a Markovian solution to \eqref{bsde}, we have 
\begin{align*}
    dY_t^i =  - f^i(t,X_t,Y_t,Z_t)dt  + Z_t^i \cdot d\tilde{B}_t, 
\end{align*}
where $\tilde{B} = B - \int \sigma^{-1}(\cdot,X)b(\cdot,X, Y, Z) dt$. 
Since $\sigma^{-1}$ is bounded, $|b(t,x,y,z)| \leq C_0(1 + |z|)$, and $Z \in \bmo$, we deduce that $\tilde{B}$ is a Brownian motion under the measure
$\tilde{\bP}$, where 
$
    d\tilde{\bP} = \mathcal{E}\big(\int \sigma^{-1}(\cdot,X)b(\cdot, X,Y,Z) \cdot dB \big) d\bP. 
$
Now we consider the process
$
    R_t \coloneqq \exp{2a_m^T Y +  \int_0^{\cdot} 2 \rho_t dt}. 
$
We compute 
\begin{align*}
    dR_t = R_t \big(- 2 a_m^T f + 2 |a_m^TZ_t|^2 + 2\rho_t)dt + 2R_t a_m^T Z_t d\tilde{B}_t. 
\end{align*}
Since $f \in \apb(\rho,\sam)$, $a_m^T f(t,x,y,z) \leq \rho + |a_m^Tz|^2$, and so $
    -a_m^T f(t,x,y,z) + |a_m^T z|^2 + \rho \geq 0. $
In particular, $R$ is a submartingale with terminal element $R_T = \exp{2a_m^T g(X_T) + 2 \int_0^T \rho_s ds}$, which satisfies $\norm{R_T}_{\linf} \leq C$, $\const{a_m, \norm{g}_{\linf}, \norm{\rho}_{\loi}}$. From the definition of $R$, we see that for each $m$ we have
\begin{align*}
    \sup_{0 \leq t \leq T} a_m^T Y_t \leq C, \, \, \const{\norm{g}_{\linf}, \norm{\rho}_{\loi}, \sam}. 
\end{align*}
Since $\sam$ positively spans $\R^n$, 
this gives us an estimate $\norm{Y}_{\sinf}$, which transfers to the desired estimate on $\norm{u}_{\linf}$.
\end{proof}

The following is a consequence of Theorem 2.5 of \cite{xing2018}.

\begin{proposition} \label{prop.holder}
Suppose that \eqref{hyp.bf} holds, $g \in \calpha$ for some $\alpha \in (0,1)$, and that $(u,v)$ is a Markovian solution of \eqref{bsde}, with $u$ bounded. Then for some $\beta \in (0,1)$ depending on $\alpha, \norm{g}_{\calpha}, C_0, \text{ and } \norm{u}_{\linf}$, we have 
\begin{align}
    \norm{u}_{\cbeta} \leq C, \,\, \const{\norm{g}_{\calpha}, C_0, \norm{u}_{\linf}}.
\end{align}
\end{proposition}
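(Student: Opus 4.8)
The plan is to reduce the claim to Theorem 2.5 of \cite{xing2018} by checking that the effective driver $F$ governing the BSDE \eqref{bsde}, defined in \eqref{Fdef}, inherits the Bensoussan--Frehse structure \eqref{hyp.bf} from $f$. By hypothesis $(u,v)$ is a Markovian solution of the \emph{quadratic} BSDE \eqref{bsde} with $u$ bounded, terminal data $g \in \calpha$, and a uniformly elliptic, Lipschitz diffusion $\sigma$; these are precisely the structural ingredients demanded by the cited a priori H\"older estimate, so once $F$ is shown to lie in the admissible class the desired bound on $\norm{u}_{\cbeta}$ follows directly from their theorem.

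The one substantive computation is the structural inheritance. Recall from \eqref{hyp.sigmab} that uniform ellipticity forces $\sigma^{-1}$ to be bounded, while part $3)$ gives $|b(t,x,y,z)| \leq C_0(1+|z|)$. Hence the extra term appearing in $F^i$ obeys $|z^i \cdot \sigma^{-1}(t,x) b(t,x,y,z)| \leq C|z^i|(1 + |z|)$ for a constant $C = C(C_0)$. Using $|z^i| \leq |z|$ together with the elementary bound $|z^i| \leq 1 + |z^i||z|$ (separate the cases $|z^i| \geq 1$, where $|z^i| \le |z^i|^2 \le |z^i||z|$, and $|z^i| < 1$), one obtains $|z^i \cdot \sigma^{-1} b| \leq C(1 + |z^i||z|)$. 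Adding this to the bound on $|f^i|$ from \eqref{hyp.bf} and absorbing the new $|z^i||z|$ contribution into the existing one shows that $F$ satisfies \eqref{hyp.bf} with the same sub-quadratic $\kappa$ and an enlarged constant $C_Q' = C_Q + C$.

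With the driver thus controlled, I would invoke Theorem 2.5 of \cite{xing2018}: the pair $(u,v)$ is a Markovian solution of the BSDE with driver $F$, and $F$ satisfies the Bensoussan--Frehse condition; the boundedness of $u$ controls the sub-quadratic term by $\sup_{|y| \leq \norm{u}_{\linf}} \kappa(|y|)$, which is exactly why the resulting exponent $\beta$ and constant are permitted to depend on $\norm{u}_{\linf}$. The cited theorem then yields $\norm{u}_{\cbeta} \leq C$ with $\const{\norm{g}_{\calpha}, C_0, \norm{u}_{\linf}}$, as claimed.

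I do not anticipate a genuine obstacle beyond bookkeeping. The only nontrivial step is verifying that \eqref{hyp.bf} is preserved under the passage from $f$ to $F$, and the mild subtlety there is simply ensuring that the cross term $z^i \cdot \sigma^{-1} b$, which is linear in $z$ for fixed $z^i$, is absorbable into the $|z^i||z|$ slot rather than degrading the growth rate. The remaining care is to confirm that the hypotheses of the cited H\"older estimate --- ellipticity and Lipschitz regularity of $\sigma$, boundedness of $u$, and H\"older terminal data --- align exactly with \eqref{hyp.sigmab} and our standing assumptions, so that the theorem applies verbatim.
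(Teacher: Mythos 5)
Your strategy is the same as the paper's: reduce everything to Theorem 2.5 of \cite{xing2018} by showing that the driver $F$ of \eqref{Fdef} inherits the Bensoussan--Frehse structure from $f$, and your absorption estimate $|z^i \cdot \sigma^{-1}b| \le C(1+|z^i||z|)$ is correct. However, there is a gap at the citation step, and it sits exactly where the paper locates the entire content of the proof. Theorem 2.5 of \cite{xing2018} does not take a pointwise growth bound of the form \eqref{hyp.bf} as its hypothesis: it requires the existence of a Lyapunov function for the system, and the route to such a function is Proposition 2.11 of \cite{xing2018}, whose hypothesis is a structural \emph{decomposition} (their (2.8))
\begin{align*}
F^i(t,x,y,z) = z^i \cdot l^i(t,x,y,z) + q^i(t,x,y,z) + s^i(t,x,y,z),
\end{align*}
with $l^i$ of at most linear growth in $z$, $q^i$ controlled by $1 + \sum_{j<i}|z^j|^2$, and $s^i$ sub-quadratic. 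A pointwise bound on $|F^i|$ is not literally such a decomposition, and your argument never produces one; you treat \eqref{hyp.bf} as though it were verbatim the hypothesis of the cited theorem. The paper's proof consists precisely of this conversion: it splits $f^i$ proportionally to the terms in its bound, defining $l^i$, $q^i$, $s^i$ as in \eqref{bfequiv} (dividing $f^i$ by $1+|z^i||z|+\sum_{j<i}|z^j|^2+\kappa(|z|)$ and multiplying by each piece, with the drift contribution $\sigma^{-1}b$ placed directly into the linear slot $l^i$ rather than absorbed by an inequality), and then checks that these satisfy the estimates of Proposition 2.11. The same splitting trick applied to your bound on $F$ would close your gap, so the omission is fillable; but as written your reduction to \cite{xing2018} is incomplete at the one step that the paper's proof actually carries out.

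A secondary point: your remark that ``the boundedness of $u$ controls the sub-quadratic term by $\sup_{|y| \leq \norm{u}_{\linf}} \kappa(|y|)$'' misreads the role of $\kappa$. Its argument is $|z|$, not $|y|$ (as is clear from \eqref{bfequiv}; the ``$\kappa(|p|)$'' in \eqref{hyp.bf} is a typo for $\kappa(|z|)$), and the sub-quadratic term is not controlled by $\norm{u}_{\linf}$; rather, its sub-quadratic growth in $z$ is what permits the Lyapunov function construction in Proposition 2.11 of \cite{xing2018}. The dependence of $\beta$ and $C$ on $\norm{u}_{\linf}$ comes from the statement of Theorem 2.5 itself, which requires a bounded Markovian solution and gives constants depending on that bound.
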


\begin{proof}
The only thing to check is that if \eqref{hyp.bf} holds, then $F$ has a decomposition as in (2.8) of \cite{xing2018}, so that Proposition 2.11 of \cite{xing2018} implies the existence of an appropriate Lyapunov function. For this, we set 
\begin{align} \label{bfequiv}
    &l^i(t,x,y,z) = \frac{f^i(t,x,y,z)}{\big(1 + |z^i| |z| + \sum_{j < i} |z^j|^2 + \kappa(|z|)\big)} \frac{ z^i |z|}{|z^i|} 1_{|z^i| \neq 0} + \sigma^{-1}(t,x) b(t,x,y,z), \nonumber \\
    &q^i(t,x,y,z) =\frac{f^i(t,x,y,z)}{\big(1 + |z^i| |z| + \sum_{j < i} |z^j|^2 + \kappa(|z|)\big)} (1 + \sum_{j < i} |z^j|^2), \nonumber, \\
    &s^i(t,x,y,z) = \frac{f^i(t,x,y,z)}{\big(1 + |z^i| |z| + \sum_{j < i} |z^j|^2 + \kappa(|z|)\big)} \kappa(|z|)
\end{align}
Then some algebra shows that we have $F^i(t,x,y,z) = z^i \cdot l^i(t,x,y,z) + q^i(t,x,y,z) + s^i(t,x,y,z)$, and $l$, $q$ and $s$ satisfy the estimates appearing in Proposition 2.11 in \cite{xing2018}. Thus we can apply Theorem 2.5 of \cite{xing2018} to complete the proof.
\end{proof}

\begin{proposition} \label{prop.calpha}
Suppose that \eqref{hyp.q} holds and that $u$ is a classical solution of \eqref{pde} such that $u \in C^{\alpha}$ for some $\alpha \in (0,1)$, and $Du$ is bounded. Then for each $\beta \in (0,1)$ there is a constant $C$ depending on $\beta$, $\alpha$, $\norm{u}_{\calpha}$, $C_0$ and $C_Q$ such that
\begin{align} \label{conealpha}
    \norm{u}_{C^{1 + \beta}([0,t_0] \times \R^d)} \leq \frac{C}{T-t_0}, \quad t_0 \in (0,T), 
\end{align}
Moreover, if $g$ is Lipschitz with Lipschitz constant $L$, then 
\begin{align} \label{lip}
    \norm{Du}_{\linf([0,T] \times \R^d)} \leq C, \const{\alpha, \norm{u}_{\calpha}, \,\,C_0, C_Q,L}.
\end{align}
\end{proposition}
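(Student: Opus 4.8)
The plan is to treat \eqref{pde} as a system of linear parabolic equations with a quadratic source and to upgrade the assumed H\"older bound to a gradient bound by a scaling-plus-absorption argument. Writing $H^i(t,x) = f^i(t,x,u,Du\sigma) + Du^i\cdot b(t,x,u,Du\sigma)$, the equation reads $\partial_t u^i + \tr(a D^2 u^i) = -H^i$, and \eqref{hyp.q} together with \ref{hyp.sigmab} gives the pointwise bound $|H^i|\leq C(1+|Du|^2)$ with $C$ depending only on $C_0,C_Q$. The principal part $\partial_t + \tr(aD^2\cdot)$ is the same in every component and, by \ref{hyp.sigmab}, has bounded measurable coefficients that are uniformly elliptic and Lipschitz in $x$; thus each component is amenable to interior parabolic $W^{2,1}_p$ and Schauder theory once the source is under control. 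The whole difficulty is that a priori $\|H^i\|_{\linf}$ is controlled only by $\|Du\|_{\linf}$, which we are not allowed to use; it must instead be extracted from $\|u\|_{\calpha}$.

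For the interior estimate \eqref{conealpha} I would argue at a fixed small scale. Fix $(t_1,x_1)$ with $t_1\leq t_0$, put $r = \min\{r_0,\sqrt{T-t_1}\}$ for a radius $r_0$ to be chosen, and rescale $w(s,y) = u(t_1 + r^2 s, x_1 + r y) - u(t_1,x_1)$ on the unit cylinder $Q_1$. A direct computation shows $w$ solves a system of the same form, with structural constants uniform in $r\leq 1$, while $\|w\|_{\linf(Q_1)}\leq C[u]_{\calpha}\,r^\alpha$ is small. Combining the interior Schauder/$W^{2,1}_p$ estimate for the frozen linear operator with the bound $\|\tilde H\|_{L^p}\lesssim 1 + \|Dw\|_{\linf}^2$ and the interpolation inequality $\|Dw\|_{\linf}\lesssim \|w\|_{C^{1+\beta}}^{1/(1+\beta)}\|w\|_{\linf}^{\beta/(1+\beta)}$ yields a self-improving inequality of the form $\Theta \leq A + B\,\Theta^{p_0}$, where $\Theta = \|w\|_{C^{1+\beta}(Q_{1/2})}$, $p_0 = 2/(1+\beta) > 1$, $A$ depends only on $\|u\|_{\calpha}$ and the structural constants, and $B = C\,r^{\alpha q_0}$ for some $q_0 > 0$ (with $C$ absorbing $\|u\|_{\calpha}$). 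Since $w$ is a classical solution, $\Theta$ is finite, and $\Theta\to 0$ as $r\to 0$; a continuity-in-$r$ argument then forces $\Theta\leq 2A$ as soon as $r\leq r_0(\|u\|_{\calpha},C_0,C_Q)$. Rescaling back gives $\|u\|_{C^{1+\beta}(Q_{r/2}(t_1,x_1))}\leq C r^{-(1+\beta)}$; since $r^{-1}\lesssim (T-t_0)^{-1/2} + 1$ for $t_1\leq t_0$ and $(1+\beta)/2 < 1$, this yields \eqref{conealpha} (with room to spare).

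For the global Lipschitz estimate \eqref{lip} I would run the analogous argument up to the terminal face, now scaling the amplitude by $r^{-1}$ rather than by $r^{-\alpha}$: with $r = \sqrt{T-t_1}$ and $v(s,y) = r^{-1}\big(u(T - r^2 + r^2 s, x_1 + r y) - g(x_1)\big)$, the rescaled source acquires an extra factor $r$, so that $|\tilde H|\leq C r(1 + |Dv|^2)$, while the rescaled terminal datum $v(1,y) = r^{-1}(g(x_1 + ry) - g(x_1))$ is uniformly Lipschitz with constant $L$. One would then hope to close a bootstrap as above and read off $|Du(t_1,x_1)| = |Dv(0,0)|\leq C(L,\dots)$, uniformly up to $t_1 = T$.

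The main obstacle is precisely this last step. Unlike in the interior argument, the $r^{-1}$ amplitude scaling does not keep $\|v\|_{\linf}$ bounded: the temporal oscillation of $u$ only gives $\|v\|_{\linf(Q_1)}\lesssim L + [u]_{\calpha}\, r^{\alpha - 1}$, which degenerates as $r\to 0$, so a single rescaled cylinder is not enough. I expect the fix to be a terminal-layer (boundary-Campanato) iteration carried out simultaneously on all cylinders $Q_\rho(z_0)$ abutting $\{t = T\}$: using that each component solves a scalar equation with the common principal part, that $g$ is Lipschitz, and that the quadratic source is absorbed thanks to the smallness of the oscillation near $t = T$, one should obtain the linear oscillation decay $\mathrm{osc}_{Q_\rho}u \leq C L \rho$, and hence the pointwise bound $|Du|\leq CL$, after controlling the accumulation of the quadratic term over the finite horizon by a Gronwall argument. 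Making this absorption quantitative for a \emph{system} — where the maximum principle is available only componentwise and $f$ is merely measurable — is the delicate point; everything else is standard parabolic theory and bookkeeping.
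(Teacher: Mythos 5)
Your interior argument for \eqref{conealpha} takes a genuinely different route from the paper's. The paper multiplies $u^i$ by cutoffs $\kappa(t)\rho^{R,x_0}(x)$, applies linear parabolic $L^p$ estimates, and then controls the resulting term $\int |\kappa|^p|Du|^{2p}$ by the Bensoussan--Frehse integration-by-parts trick: after integrating by parts, every quadratic-gradient term carries a factor $|u - c^{R,x_0}(t)| \leq C R^{\alpha}$, so for $R$ small (depending only on $[u]_{\calpha}$ and the structural constants) these terms are absorbed, and parabolic Sobolev embedding with $p$ large gives \eqref{conealpha}. You instead rescale to unit cylinders, where the $\calpha$ bound makes $\norm{w}_{\linf}$ of order $r^{\alpha}$, and absorb the quadratic source via interpolation and a continuity-in-$r$ argument. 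Both proofs exploit the same mechanism --- smallness of the oscillation of $u$ at small parabolic scales beats the quadratic growth --- and your version is viable; note, however, that as written your self-improving inequality $\Theta \leq A + B\,\Theta^{p_0}$ mixes norms on $Q_{1/2}$ (left side, from the interior estimate) and on $Q_1$ (right side, where the interpolation is applied), so to make the absorption legitimate you need interior-weighted norms, e.g. $\sup_{\rho<1} (1-\rho)^{1+\beta}\norm{w}_{C^{1+\beta}(Q_\rho)}$, or an iteration over nested cylinders. This is standard but not free.

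The genuine gap is \eqref{lip}. You correctly diagnose why the naive $r^{-1}$ amplitude scaling fails ($\norm{v}_{\linf}$ blows up like $r^{\alpha-1}$), but your proposed repair --- a terminal-layer Campanato iteration giving linear oscillation decay for the system --- is left as a hope, and you yourself flag the absorption step for the system as ``the delicate point''; so the second half of the statement is not proved. The paper sidesteps all of this with a one-line reduction you missed: let $v^i$ solve the linear equation $\partial_t v^i + \tr(a\, D^2 v^i) = 0$ with $v^i(T,\cdot) = g^i$; standard theory gives $\norm{Dv}_{\linf} \leq C(C_0,L)$. Then $\tilde u \coloneqq u - v$ solves a system of the same form whose driver $\tilde F^i(t,x,y,z) = F^i(t,x,y+v(t,x), z + \sigma Dv(t,x))$ still satisfies \eqref{hyp.q} (with a new constant), and whose terminal datum is identically zero. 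With zero terminal datum, the constraint that the cylinders avoid $\{t = T\}$ --- your restriction $r \leq \sqrt{T-t_1}$, or the paper's cutoff $\kappa$ --- is no longer needed: the paper simply repeats its computation without $\kappa$; in your framework, one can run the same rescaling-absorption on cylinders of fixed radius $r_0$ touching the terminal face, since there $\norm{w}_{\linf} \lesssim [\tilde u]_{\calpha}\, r_0^{\alpha}$ is small and the terminal data of $w$ vanish, so boundary $W^{2,1}_p$/Schauder estimates close the bootstrap and give $\norm{D\tilde u}_{\linf} \leq C$ up to $t = T$. Combining with the bound on $\norm{Dv}_{\linf}$ yields \eqref{lip}. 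In short: subtracting the caloric extension of $g$ is the missing idea; without it, your treatment of \eqref{lip} is a conjecture, not a proof.
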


\begin{proof}
Fix $p \in (1,\infty)$. Throughout this proof, $C$ denotes a constant which can change from line to line but depends only on $p$, $\alpha$, $\norm{u}_{\calpha}$, and $C_Q$. We will introduce below parameters $R > 0$ and $t_0 \in [0,T)$, and it is important that $C$ does not depend on $R$ or $t_0$. For constants which can depend on $R$ (but not $t_0$) in addition to the constants $p$, $\alpha$, $\calpha$ and $C_Q$ we use $C_R$. 

We now fix a function $\rho \in C_c^{\infty}(\R^d)$ such that $0 \leq \rho \leq 1$, $\rho(x) = 1$ for $|x| \leq 1$, $\rho(x) = 0$ for $|x| > 2$. Then we define for each $x_0 \in \R^d$ and $R > 0$, the function $\rho^{R,x_0}(x) = \rho(\frac{x - x_0}{R})$, 
and note that $\rho^{R, x_0}(x) = 1$ for $x \in B_R(x_0)$ and $\rho^{R,x_0}(x) = 0$ for $x \in B_{2R}(x_0)^c$. Next, we fix a smooth function $\kappa = \kappa(t) : [0,T] \to [0,1]$ with $\kappa(t) = 1$ for $0 \leq t \leq t_0$ and $\kappa(t) = 0$ for $t > (t_0 + T)/2$. We can choose $\kappa$ so that $|\kappa'(t)| \leq \frac{3}{T - t_0}$. Next, we find the equation satisfied by $\tilde{u}^i(t,x) = \kappa(t) \rho^{R,x_0}(x)u^i(t,x)$. Some computations show that 
\begin{align} \label{tildecomp}
    \partial_t \tilde{u}^i + \tr(a D^2 \tilde{u}^i) + \kappa(t) \rho^{R, x_0}(x) F^i(t,x,u,\sigma Du) 
    = \kappa'(t) \rho^{R,x_0}(x) u^i(t,x) \nonumber \\ + \sum_{j,k} a^{jk} \big( \kappa D_k \rho^{R,x_0} D_ju^i + \kappa D_{kj} \rho^{R,x_0} u^i + \kappa D_j \rho D_k u^i \big).
\end{align}
We use Young's inequality to estimate the right-hand side of \eqref{tildecomp}, and then deduce from the theory of linear parabolic equations the existence of constants $C$ and $C_R$ such that 
\begin{align} 
    \int_0^T \int_{B_{2r}(x_0)}  \big(|\partial_t \tilde{u}^i(t,x)|^p + |D \tilde{u}^i(t,x)|^p + |D^2 \tilde{u}^i(t,x)|^p \big) dx dt \leq C \int_0^T \int_{B_{2R}} \big(|\kappa(t)| |Du|^2 + \frac{C_R}{T-t_0}\big)^p dx dt \nonumber \\
    \leq 2^p C \int_0^T \int_{B_{2R}} |\kappa(t)|^p |Du|^{2p} dx dt + 2^p C \big(\frac{C_R}{T-t_0}\big)^p = C \int_0^T \int_{B_{2R}} |\kappa(t)|^p |Du|^{2p} dx dt + \frac{C_R}{|T-t_0|^p}
\end{align}
holds for all $R \leq 1$ and all $x_0 \in \R^d$, $t_0 \in [0,T)$, and where in the last line we increased $C$ and $C_R$ (and we recall that $C$ and $C_R$ may depend on $p$). Since $\partial_t \tilde{u}^i = \kappa'(t) u^i + \kappa(t) \partial_t u^i$ and $D^k \tilde{u}^i = \kappa(t) D^ku$ on $[0,T] \times B_R(x_0)$, we can infer
\begin{align} \label{est1}
    \int_0^T \int_{B_R(x_0)} |\kappa(t)|^p \big(|\partial_t u|^p + |Du|^p + |D^2 u|^p \big) dx dt &\leq C \int_0^T \int_{B_{2R}} |\kappa(t)|^p |Du|^{2p} dx dt + \frac{C_R}{|T-t_0|^p}
\end{align}
The next step is to set 
\begin{align*}
    c^{R,x_0,i}(t) = \frac{1}{2} \big(\max_{Q_{4R}(x_0)} u^i + \min_{Q_{4R(x_0)}} u^i \big)
\end{align*}
and then follow a computation from \cite{bengame}, integrating by parts in space to find
\begin{align*}
    \int_{0}^T \int_{\R^d} |\kappa(t)|^{p} |\rho^{2R,x_0} Du|^{2p} dx dt = 
    - \int_0^T \int_{\R^d} |\kappa(t)|^p |\rho^{2R, x_0}|^{2p} |Du|^{2p - 2} \sum_i \Delta u^i  (u^i - c^{R,x_0,i}(t)) dx dt \\
    - 2p \int_0^T \int_{\R^d}  |\kappa(t)|^p |\rho^{2R, x_0}|^{2p-1}  \sum_{i,j} D_j \rho^{2R,x_0} |Du|^{2p -2} D_j u^i (u^i - c^{R,x_0,i}(t)) dx dt \\
    - 2(p-1) \int_0^T \int_{\R^d} |\kappa(t)|^p |\rho^{2R,x_0}|^{2p} |Du|^{2p-4} \sum_{i,l,j,k} D_{jk}u^i D_j u^i D_k u^l (u^l - c^{R,x_0,l}(t)) dx dt.
\end{align*}
Applying Young's inequality to the right hand side of the the estimate above, we get
\begin{align*}
    \int_{0}^T \int_{\R^d} |\kappa(t)|^{p} |\rho^{2R,x_0} Du|^{2p} dx dt 
    \leq C \bigg(  \int_{0}^T \int_{\R^d} |\kappa(t)|^{p} |\rho^{2R,x_0}|^{2p} |D^2 u|^p |u - c^{R,x_0}| dx dt  \\
    + \int_{0}^T \int_{\R^d} |\kappa(t)|^{p} |\rho^{2R,x_0}|^{2p} |D u|^{2p} |u - c^{R,x_0}| dx dt + \int_{0}^T \int_{\R^d} |\kappa(t)|^{p} |D\rho^{2R,x_0}|^{2p}|u - c^{R,x_0}| dx dt \bigg)
    \\
    \leq C R^{\alpha} \big( \int_{0}^T \int_{\R^d} |\kappa(t)|^{p} |\rho^{2R,x_0}|^{2p}| |D^2 u|^p dx dt + \int_{0}^T \int_{\R^d} |\kappa(t)|^{p} |\rho^{2R,x_0}|^{2p}| |D u|^{2p} dx dt \big) + C_R
\end{align*}
and so in particular
\begin{align*}
\int_{0}^T \int_{B_{2R}(x_0)} |\kappa(t)|^{p} |Du|^{2p} dx dt \leq C R^{\alpha} \bigg( \int_{0}^T \int_{B_{4R}(x_0)} |\kappa(t)|^{p} |D^2 u|^p dx dt + \int_{0}^T \int_{B_{4R}(x_0)} |\kappa(t)|^{p} |D u|^{2p} dx dt \bigg) + C_R. 
\end{align*}
We can combine this with \eqref{est1} to find that 
\begin{align}
  \sup_{x_0} \int_0^T& \int_{B_R(x_0)} |\kappa(t)|^p \big(|\partial_t u|^p + |Du|^p + |D^2 u|^p \big) dx dt\nonumber \\ &\leq C R^{\alpha} \bigg(  \sup_{x_0} \int_{0}^T \int_{B_{4R}(x_0)} |\kappa(t)|^{p} |D^2 u|^p dx dt  +  \sup_{x_0} \int_{0}^T \int_{B_{4R}(x_0)} |\kappa(t)|^{p} |D u|^{2p} dx dt \bigg) + \frac{C_R}{|T-t_0|^p} \nonumber \\
 &\leq C R^{\alpha} \bigg(  \sup_{x_0} \int_{0}^T \int_{B_{R}(x_0)} |\kappa(t)|^{p} |D^2 u|^p dx dt +  \sup_{x_0} \int_{0}^T \int_{B_{R}(x_0)} |\kappa(t)|^{p} |D u|^{2p} dx dt \bigg) + \frac{C_R}{|T-t_0|^p} 
\end{align}
and so taking $R$ sufficiently small, we conclude 
\begin{align} \label{est2}
    \sup_{x_0} \int_0^T& \int_{B_R(x_0)} |\kappa(t)|^p \big(|\partial_t u|^p + |Du|^p + |D^2 u|^p \big) dx dt \leq \frac{C_R}{|T-t_0|^p}, 
\end{align}
The estimate \eqref{conealpha} now follows from the Sobolev embedding. The proof that \eqref{lip} holds when $g$ is Lipschitz is entirely similar, so we provide only a brief description of the argument. First, we set $v^i$ to be the unique solution to the linear equation $
    \partial_t v^i + \tr(a(t,x) D^2 v^i) = 0$, with terminal condition $v^i(T,x) = g^i(x)$.
Then it is standard that $v^i$ is smooth on $[0,T) \times \R^d$, with $\norm{Dv}_{\linf([0,T] \times \R^d)} \leq C$, $C = C(C_0, L)$. Moreover, $\tilde{u} := u - v$ satisfies \begin{align*}
        \partial_t \tilde{u}^i + \tr(aD^2 \tilde{u}^i) + \tilde{F}^i(t,x,\tilde{u}, \sigma D \tilde{u}) = 0, \,\,
        u(T,x) = 0, 
\end{align*}
where $\tilde{F}^i(t,x,y,z) = F^i(t,x,y + v(t,x), z + \sigma Dv(t,x))$ satisfies \eqref{hyp.q} with a new constant $C_Q'$ depending on $C_Q$ and $\norm{v}_{\linf}$, $\norm{Dv}_{\linf}$. Now we can repeat the same computations as above, but without multiplying by $\kappa$, to get an estimate on $\norm{D\tilde{u}}_{\cbeta([0,T] \times \R^d)}$, which implies the estimate \eqref{lip}. 
\end{proof}

\begin{corollary} \label{cor.interp}
Under the same hypotheses as Proposition \ref{prop.calpha}, for each $\epsilon > 0$ there is a constant $C$ depending on $\epsilon$, $\beta$, $\alpha$, $\norm{u}_{\calpha}$, $C_0$ and $C_Q$ such that
\begin{align*}
\norm{u(t,\cdot)}_{C^{1+\beta}(\R^d)} \leq \frac{C}{(T-t)^{(1 + \beta)/2 + \epsilon}}. 
\end{align*}
In particular, we have for each $\epsilon > 0$ a constant $C$ such that 
\begin{align*}
|Du(t,x)| \leq \frac{C}{(T-t)^{1/2 + \epsilon}}. 
\end{align*}
\end{corollary}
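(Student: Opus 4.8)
The plan is to obtain the sharp blow-up rate by a parabolic rescaling that expands a neighborhood of the terminal time to a fixed time interval, thereby reducing the estimate to a single application of Proposition \ref{prop.calpha}. Fix $t \in (0,T)$ and write $\tau = T - t$. I would introduce the rescaled function
\begin{align*}
    w(s,\xi) := u\big(t + \tau s,\, \sqrt{\tau}\,\xi\big), \qquad (s,\xi) \in [0,1] \times \R^d,
\end{align*}
together with the rescaled data $\tilde\sigma(s,\xi) = \sigma(t+\tau s, \sqrt\tau\xi)$, $\tilde b(s,\xi,y,\zeta) = \sqrt\tau\, b(t+\tau s, \sqrt\tau\xi, y, \tau^{-1/2}\zeta)$ and $\tilde f(s,\xi,y,\zeta) = \tau\, f(t+\tau s, \sqrt\tau\xi, y, \tau^{-1/2}\zeta)$. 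Multiplying \eqref{pde} by $\tau$ and using $D_\xi w = \sqrt\tau\,(Du)$ and $D^2_\xi w = \tau\,(D^2 u)$, one checks that $w$ is again a classical solution of the PDE \eqref{pde} associated with the data $(\tilde\sigma,\tilde b,\tilde f)$ on $[0,1] \times \R^d$, now with terminal time $1$.

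The crux is that the structural hypotheses are scale-invariant. Indeed, $\tilde\sigma$ inherits the ellipticity bounds \eqref{hyp.sigmab} with the same $C_0$ and is Lipschitz with constant $C_0\sqrt T$; moreover $|\tilde b| = \sqrt\tau\,|b| \leq C_0(\sqrt\tau + |\zeta|) \leq C_0(\sqrt T + |\zeta|)$ and $|\tilde f^i| = \tau\,|f^i| \leq C_Q(\tau + |\zeta|^2) \leq C_Q(T + |\zeta|^2)$, so both \eqref{hyp.sigmab} and \eqref{hyp.q} hold for the rescaled data with constants depending only on $C_0$, $C_Q$ and $T$. This is exactly where quadratic growth is essential, since it is the unique growth rate compatible with the anisotropic parabolic scaling. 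In addition, $w$ is bounded in $\calpha$ uniformly in $\tau$: the change of variables gives $\norm{w}_{\linf} = \norm{u}_{\linf([t,T]\times\R^d)}$ and $[w]_{\calpha([0,1]\times\R^d)} \leq \tau^{\alpha/2}[u]_{\calpha} \leq T^{\alpha/2}[u]_{\calpha}$, while $D_\xi w = \sqrt\tau\,Du$ is bounded (only its finiteness is needed below).

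With these uniform bounds I would apply Proposition \ref{prop.calpha}, namely estimate \eqref{conealpha}, to $w$ on the slab $[0,\tfrac12]\times\R^d$, obtaining $\norm{w}_{C^{1+\beta}([0,1/2]\times\R^d)} \leq C/(1 - \tfrac12) = 2C$, with $C$ depending only on $\beta,\alpha,\norm{u}_{\calpha},C_0,C_Q$ and $T$. Evaluating at $s = 0$ (so that $\sqrt\tau\,\xi$ ranges over all of $\R^d$), and using $D_\xi w(0,\cdot) = \sqrt\tau\,Du(t,\cdot)$ together with $[D_\xi w(0,\cdot)]_{C^\beta(\R^d)} = \tau^{(1+\beta)/2}[Du(t,\cdot)]_{C^\beta(\R^d)}$, the bound $2C$ converts into
\begin{align*}
    \norm{u(t,\cdot)}_{\linf} + \sqrt\tau\,\norm{Du(t,\cdot)}_{\linf} + \tau^{(1+\beta)/2}[Du(t,\cdot)]_{C^\beta} \leq 2C,
\end{align*}
whence $\norm{u(t,\cdot)}_{C^{1+\beta}(\R^d)} \leq C\,\tau^{-(1+\beta)/2}$ and in particular $|Du(t,x)| \leq C\,\tau^{-1/2}$. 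Since $\tau = T - t \leq T$, we have $\tau^{-(1+\beta)/2} \leq \max(1,T)^{\epsilon}\,\tau^{-(1+\beta)/2-\epsilon}$, so both displayed estimates of the corollary follow (and in fact the sharper versions with $\epsilon = 0$ hold). The main obstacle is purely one of bookkeeping: one must verify carefully the scale-invariance of \eqref{hyp.q} and the correct anisotropic scaling of the parabolic Hölder seminorms, and confirm that the constant in \eqref{conealpha} does not depend on $\norm{Du}_{\linf}$, so that the resulting estimate is genuinely uniform over all base points and all $\tau$.
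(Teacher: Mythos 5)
Your proof is correct, but it takes a genuinely different route from the paper. The paper's own proof is a one-liner: combine the slab estimate \eqref{conealpha} of Proposition \ref{prop.calpha} (applied with an exponent $\beta' \in (\beta,1)$ close to $1$, which gives $\norm{u(t,\cdot)}_{C^{1+\beta'}(\R^d)} \leq C/(T-t)$) with the interpolation inequality for H\"older norms in Exercise 3.2.6 of \cite{krylovholder}, played against the uniform $\calpha$ bound; the exponent $(1+\beta)/2 + \epsilon$ and the dependence of the constant on $\epsilon$ are precisely the loss incurred by that interpolation. You instead rescale parabolically to unit scale and apply \eqref{conealpha} a single time, at $t_0 = 1/2$, to the rescaled solution $w$, so the blow-up rate comes out of the scaling rather than out of interpolation; this yields the sharper exponent $(1+\beta)/2$ with $\epsilon = 0$, and the corollary follows a fortiori. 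Your bookkeeping is the right thing to check and is correct: the quadratic growth in \eqref{hyp.q} and the linear growth of $b$ in \eqref{hyp.sigmab} are exactly preserved under the anisotropic scaling (with $1$ replaced by $T$ in the constants), $\norm{w}_{\calpha([0,1]\times\R^d)} \leq \max(1,T^{\alpha/2})\norm{u}_{\calpha}$ uniformly in $\tau$, the constant in \eqref{conealpha} involves $\norm{Du}_{\linf}$ only through its finiteness, and the seminorm conversions $D_\xi w(0,\cdot) = \sqrt{\tau}\, Du(t,\cdot)$ and $[D_\xi w(0,\cdot)]_{\cbeta(\R^d)} = \tau^{(1+\beta)/2}[Du(t,\cdot)]_{\cbeta(\R^d)}$ are exact. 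The one point you should state explicitly is that you invoke Proposition \ref{prop.calpha} for a problem posed on the time interval $[0,1]$ rather than $[0,T]$: as literally stated the proposition concerns the paper's fixed horizon $T$, so you need the (true, and implicit in its proof) fact that it holds for an arbitrary fixed horizon with a constant depending on that horizon; since your rescaled horizon is always $1$, this dependence is harmless. With that caveat made explicit, your argument is complete and in fact strengthens the statement being proved.
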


\begin{proof}
Combine Proposition \ref{prop.calpha} with Exercise 3.2.6 of \cite{krylovholder}.
\end{proof}

Now we come to the main existence result.

\begin{theorem} \label{thm.exist}
Suppose that $f$, $b$, and $\sigma$ are continuous and in addition \ref{hyp.sigmab}, \ref{hyp.bf}, and \ref{hyp.ab} hold. Suppose further that $g \in \cbeta$ for some $\beta \in (0,1)$. Then there exists a $\bmo$ decoupling field $(u,v)$ for \eqref{fbsde} such that for some $\alpha \in (0,1)$ $u \in \calpha([0,T] \times \R^d)$ and for each $\epsilon > 0$ there is a $C > 0$ such that $v$ satisfies 
\begin{align} \label{vest}
    |v(t,x)| \leq \frac{C}{(T-t_0)^{1/2 + \epsilon}}
\end{align}
Moreover, $Du \in \calpha([0,t] \times \R^d)$ for each $t < T$, and $v = \sigma Du$. Finally, if $g$ is Lipschitz, then $Du$ is bounded on $[0,T] \times \R^d$.
\end{theorem}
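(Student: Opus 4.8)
The plan is to obtain $(u,v)$ as a limit of classical solutions of regularized versions of the PDE system \eqref{pde}, using the a-priori estimates already in hand to pass to the limit, and then to invoke Proposition \ref{prop:basic} to convert the resulting Markovian solution of \eqref{bsde} into a decoupling field for \eqref{fbsde}.

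\textbf{Step 1 (Regularization).} First I would construct approximating data $f_k$, $g_k$ that are smooth, with $f_k$ of at most linear growth in $z$ and Lipschitz in $(y,z)$, such that $f_k \to f$ and $g_k \to g$ locally uniformly and, crucially, such that \ref{hyp.ab}, \ref{hyp.bf}, and \ref{hyp.q} persist for $f_k$ \emph{uniformly} in $k$, i.e. with the same (or harmlessly enlarged) constants $\rho$, $\sam$, $C_Q$ and the same $\kappa$. Keeping $f_k$ Lipschitz with linear growth in $z$ is what makes the quasilinear parabolic system classically solvable, while preserving the three structural conditions is what makes all of the estimates below uniform in $k$. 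Since $\sigma$ is already Lipschitz and nondegenerate by \ref{hyp.sigmab}, I keep it fixed, so that the forward SDE $X^{t,x}_{t'} = x + \int_t^{t'} \sigma\, dB$ is the same for every $k$.

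\textbf{Step 2 (Uniform a-priori bounds).} For each fixed $k$, standard parabolic theory for systems with bounded Lipschitz, linearly-growing coefficients produces a classical solution $u_k$ of \eqref{pde} with data $(f_k,g_k)$; setting $v_k = \sigma Du_k$, the pair $(u_k,v_k)$ is a $\bmo$ Markovian solution of the corresponding \eqref{bsde}. I then apply the a-priori estimates: Lemma \ref{lem.ab} gives $\|u_k\|_{\linf} \leq C$; Proposition \ref{prop.holder} upgrades this to $\|u_k\|_{\cbeta} \leq C$ for some $\beta \in (0,1)$ independent of $k$; and Proposition \ref{prop.calpha} with Corollary \ref{cor.interp} give uniform interior bounds $\|u_k\|_{C^{1+\beta}([0,t] \times \R^d)} \leq C_t$ for each $t < T$ together with the blow-up bound $|Du_k(t,x)| \leq C/(T-t)^{1/2+\epsilon}$. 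Separately, boundedness of $u_k$ together with the quadratic growth \ref{hyp.q} yields, via the standard $\bmo$ estimate for quadratic BSDEs, a uniform bound $\|v_k(\cdot,X^{t,x})\|_{\bmo} \leq C$; in the Lipschitz-$g$ case I replace Corollary \ref{cor.interp} by \eqref{lip} to get a uniform bound on $\|Du_k\|_{\linf([0,T]\times\R^d)}$.

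\textbf{Step 3 (Compactness and passage to the limit).} By Arzel\`a--Ascoli and a diagonal argument over compact sets I extract a subsequence with $u_k \to u$ locally uniformly, so $u \in \calpha([0,T]\times\R^d)$, and $Du_k \to Du$ locally uniformly on $[0,t]\times\R^d$ for each $t<T$, so $Du \in \calpha([0,t]\times\R^d)$; setting $v := \sigma Du$ yields $v_k \to v$ on $[0,t]\times\R^d$ and the bound \eqref{vest}. Passing to the limit in the interior equation shows $u$ solves \eqref{pde} classically on $[0,T)\times\R^d$ with $u(T,\cdot)=g$ by continuity. To verify that $(u,v)$ is a $\bmo$ Markovian solution of \eqref{bsde}, I fix $(t,x)$, apply It\^o's formula to $u(s,X^{t,x}_s)$ on $[t',T-\delta]$ to recover \eqref{bsdemarkov} there, and let $\delta \downarrow 0$; the uniform $\bmo$ bound passes to the limit to give $v(\cdot,X^{t,x}) \in \bmo$. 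Since $v$ is bounded on $[0,t]\times\R^d$ for $t<T$, Proposition \ref{prop:basic} then converts this into a $\bmo$ decoupling field for \eqref{fbsde}; in the Lipschitz case the uniform gradient bound passes to the limit to give $Du$ bounded on all of $[0,T]\times\R^d$.

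\textbf{Main obstacle.} I expect the two hard points to be, first, constructing the regularization in Step 1 so that \ref{hyp.ab} and \ref{hyp.bf} survive uniformly while simultaneously rendering the system classically solvable: a naive mollification or $z$-truncation need not preserve the signed pointwise inequality $a_m^T f \leq \rho + \tfrac12|a_m^T z|^2$, so the truncation must be arranged with some care. Second, the passage to the limit near the terminal time in Step 3 is delicate precisely because the gradient bound degenerates as $t \uparrow T$: away from $T$ one has genuine local uniform convergence of $(u_k,v_k)$, but on $[T-\delta,T]$ the convergence of the driver integral $\int_{t'}^T F_k\, ds$ and of the stochastic integral must be controlled purely through the uniform $\bmo$ estimate, combining local uniform convergence with a $\bmo$-based dominated convergence argument.
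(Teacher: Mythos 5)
Your proposal follows essentially the same route as the paper's proof: regularize the data so that \ref{hyp.ab} and \ref{hyp.bf} hold uniformly, solve the regularized PDE system classically, feed the approximations through Lemma \ref{lem.ab}, Proposition \ref{prop.holder}, Proposition \ref{prop.calpha} and Corollary \ref{cor.interp} to get uniform $\linf$, H\"older and interior $C^{1+\beta}$ bounds, pass to a limit by Arzel\`a--Ascoli, identify the limit as a $\bmo$ Markovian solution of \eqref{bsde}, and convert it into a decoupling field for \eqref{fbsde} via the Girsanov argument of Proposition \ref{prop:basic}. The paper's regularization is exactly of the type you describe: the radial truncation $\pi^k(z) = z\min(1,k/|z|)$ followed by mollification in $(t,x,y,z)$; note that your worry about \ref{hyp.ab} is unfounded for this choice, since $\pi^k(z)$ is a scalar multiple of $z$ with factor at most one, so $|a_m^T\pi^k(z)| \leq |a_m^T z|$ and the signed inequality survives truncation automatically (mollification then only costs a harmless enlargement of $\rho$).

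The one step whose justification is wrong as written is the uniform $\bmo$ bound in Step 2. You invoke ``the standard $\bmo$ estimate for quadratic BSDEs'' from boundedness of $u_k$ plus \ref{hyp.q}; that estimate is a scalar fact (exponential transform of a one-dimensional bounded solution), and it is precisely what fails for systems: when $n \geq 2$, a bounded $Y$ together with quadratic growth of the driver does not imply $Z \in \bmo$, and this failure is the central difficulty of the whole subject. The fix is available inside your own construction. Either run the exponential submartingale argument from the proof of Lemma \ref{lem.ab} quantitatively: under \ref{hyp.ab} and the linear growth of $b$ in $z$ from \ref{hyp.sigmab}, the processes $\exp\big(\lambda a_m^T Y_t + \lambda \rho t\big)$ (with $\lambda$ large, summed over $m$, and after a Young's inequality absorption of the cross term coming from $z^i \cdot \sigma^{-1}b$) have drift dominating $c|Z_t|^2$ and bounded terminal values, which gives $\bE_\tau\big[\int_\tau^T |Z_s|^2\, ds\big] \leq C$ uniformly in $k$ because $\{a_m\}$ positively spans $\R^n$; or, as the paper does, use the Lyapunov function for $F$ supplied by the decomposition \eqref{bfequiv} and Proposition 2.11 of \cite{xing2018}, which yields the same uniform bound. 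With that repair, your Step 3 (passage to the limit away from $T$ plus a $\bmo$-based argument near $T$, then Proposition \ref{prop:basic}) matches the paper's argument.
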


\begin{proof}
First, we truncate in $z$ - in particular, we define $\pi^k : (\R^d)^n \to (\R^d)^n$ by $\pi^k(z) = z$ for $|z| \leq k$, $\pi^k(z) = \frac{k z}{|z|}$ for $|z| > k$. Then we set $f^{(k),i}(t,x,y,z) = f^i(t,x,y,\pi^k(z))$, $b^{(k)}(t,x,y,z) = b(t,x,y,\pi^k(z))$. Then for $\epsilon > 0$, we define $f^{(k),\epsilon,i}$ and $b^{(k),\epsilon}$ through mollification in the variables $(t,x,y,z)$. More precisely, we let $(\rho_{\epsilon})_{0 \leq \epsilon \leq 1}$ be a standard mollifier on $\R \times \R^d \times \R^n \times (\R^d)^n$ and we set
\begin{align*}
    f^{(k),\epsilon,i}(t,x,y,z) = \int_{\R \times \R^d \times \R^n \times (\R^d)^n} f^{(k),i}(t',x',y',z') \rho_{\epsilon}(t - t', x - x', y - y', z - z') dt' dx' dy' dz',
\end{align*}
where we have extended $f^{(k),i}$ to all of $\R \times \R^d \times \R^{n} \times (\R^d)^n$ by $l^{(k),i}(t,x,y,z) = l^{k,i}((0 \vee t) \wedge T, x,y,z)$. We define $b^{(k),\epsilon}$ similarly. Finally, set $g^{\epsilon}$ to be a standard mollification of $g$. Since $b^{\epsilon}$, $f^{(k),\epsilon}$, $g^{\epsilon}$ are all smooth with bounded derivatives of all orders, there is a unique classical solution $u^{(k),\epsilon}$ to the PDE 
\begin{align*}
    \partial_t u^{(k),\epsilon,i} + \frac{1}{2}\tr( a D^2 u^{(k),\epsilon,i} + f^{(k),\epsilon,i}(t,x,u^{(k),\epsilon}, Du^{(k),\epsilon}) + Du^{(k),\epsilon,i} \cdot b^{(k),i}(t,x,u^{(k),\epsilon}, Du^{k,\epsilon}) = 0.
    \end{align*}
    Some computations show that the data $(b^{(k),\epsilon}, f^{(k),\epsilon}, g^{\epsilon})$ satisfy the conditions \eqref{hyp.sigmab}, \eqref{hyp.ab}, and \eqref{hyp.bf} uniformly in the parameters $k$ and $\epsilon$. 
    Applying Propositions \ref{prop.holder} and \ref{prop.calpha} we obtain a constant $C > 0$ such that the estimates $\norm{u^{(k),\epsilon}}_{\calpha} \leq C$, $\norm{Du^{(k),\epsilon}}_{C^{\alpha}([0,t_0] \times \R^d)} \leq \frac{C}{T - t_0}$ hold for each $t_0 < T$ and each $k,\epsilon$. A standard compactness argument gives us a function $u \in C^{\alpha}([0,T] \times \R^d) \cap C^{1+\alpha}_{\text{loc}}([0,T) \times \R^d)$ satisfying the same estimates as the $u^{(k),\epsilon}$, and such that for some $k_j \uparrow \infty$, $\epsilon_j \downarrow 0$, we have $u^{(k_j),\epsilon_j} \to u$ locally uniformly on $[0,T] \times \R^d$ and $ Du^{(k_j),\epsilon_j} \to Du$ locally uniformly on $[0,T) \times \R^d$. Fix $(t,x) \in [0,T] \times \R^d$ and define $X = X^{t,x}$ by \eqref{sde}. By passing to the limit in the equation
    \begin{align*}
        u^{(k_j),\epsilon_j}(t',X_{t'}) &=  u^{(k_j),\epsilon_j}(T,X_T) + \int_{t'}^T F^{(k_j),\epsilon_j}(s,X_s, u^{(k_j),\epsilon_j}(s,X_s), \sigma Du^{(k_j),\epsilon_j}(s,X_s)) ds \\ &- \int_{t'}^T \sigma Du^{(k_j),\epsilon_j}(s,X_s) dB_s
    \end{align*}
    we confirm that the pair $(u,\sigma Du)$ is a Markovian solution for \eqref{bsde}. The boundedness of $u$ and the fact that $F$ admits a Lyapunov function can be used to verify that $(u,\sigma Du)$ is a $\bmo$ decoupling field, and hence by Proposition \ref{prop:basic} a decoupling field \eqref{fbsde}. It is clear that if $g$ is Lipschitz, then by Proposition \ref{prop.calpha} the $u^{(k),\epsilon,i}$ are Lipschitz in space, uniformly in $k$ and $\epsilon$, from which it follows that $Du$ (and hence $v$) is bounded. 
\end{proof}

\begin{remark}
Let $(u,v)$ be the decoupling field produced by the above compactness argument. The convergence we obtain is strong enough to guarantee that $u$ is in fact a weak solution of the PDE \eqref{pde} in the sense of integration by parts, see e.g. Definition 4.1 in \cite{FENG2018959}. Verifying that \textit{any} decoupling field of \eqref{fbsde} corresponds to a weak solution of \eqref{pde} and vice-versa is much more subtle, and relates to a line of research on the connection between BSDEs and weak solutions of PDEs (rather than viscosity solutions) that dates back to \cite{barleslesigne}.
\end{remark}

\section{Application to stochastic differential games} \label{sec:games}

\subsection{Set-up and definition of Markovian Nash equilibrium} \label{subsec:gamedef}
We consider a game in which players $i = 1,...,n$ choose controls $\alpha^1,...,\alpha^n$ which take values in measurable sets $A^i \subset \R^k$, and influence the $d$-dimensional state process $X$ through the dynamics 
\begin{align*}
    dX_t = b(t,X_t, \vec{\alpha}(t,X_t)) ds + \sigma(t,X_t) \dot dB_t. 
\end{align*}
Here $\vec{\alpha}$ denotes $(\alpha^1,...,\alpha^k)$. The goal of player $i$ is to maximize the payoff functional 
 $J^i(\vec{\alpha}) = \bE[g^i(X_T) + \int_0^T r^i(t,X_t, \vec{\alpha}(t,X_t))ds]$. More precisely, the game is specified by the following data:
\begin{itemize}
    \item for each $i$, a number $k_i \in \N$ and a set $A^i \subset \R^{k_i}$ which represents the set of possible actions of player $i$ (we could take $A^i$ to be an arbitrary metric space, but we will use subsets of Euclidean space for simplicity of notation),
    \item a measurable function $b : [0,T] \times \R^d \times A \to \R^d$, where we set $A = \prod_{i = 1}^n A^i$,
    \item a measurable function $\sigma : [0,T] \times \R^d \to \R^{d \times d}$
    \item for each $i$, a measurable function $r^i : [0,T] \times \R^{d} \times A \to \R$, \item for each $i$, a measurable function $g^i : \R^d \to \R$. 
\end{itemize}
We define $\sA_i$ to be the set of \textit{bounded} measurable functions $[0,T] \times \R^d \to A^i$, and $\sA = \prod_{i = 1}^n \sA^i$. We assume for the moment that we have for each $t \in [0,T]$ and $x \in \R^d$ a unique strong solution to the SDE
\begin{align} \label{stateeqn}
    dX^{t,x}_s = b(s,X^{t,x}_s, \vec{\alpha}(s,X^{t,x}_s)) ds + \sigma(s,X^{t,x}_s)  dB_s, \, \, X^{t,x}_t = x.
\end{align}
For each $(t,x) \in [0,T] \times \R^d$, player $i$ has a payoff functional $J^i_{t,x} : \sA \to \R$, defined by
\begin{align*}
    J^i_{t,x}(\vec{\alpha}) = \bE[g^i(X_T^{t,x}) + \int_t^T r^i(s,X^{t,x}_s, \vec{\alpha}(s,X^{t,x}_s))ds]. 
\end{align*}
We also assume for the moment that the integrals appearing in the definition of $J^i_{t,x}$ are well-defined for each $\vec{\alpha} \in \sA$.
\begin{definition}
We say that $\vec{\alpha} = (\alpha^1,...,\alpha^n) \in \sA$ is a Markovian Nash equilibrium (MNE) for the game with data $(A^i,b,\sigma, r,g)$ if for each $i \in \{1,...,n\}$, $\beta \in \sA^i$ and each $(t,x) \in [0,T] \times \R^d$, we have
\begin{align*}
    J^i_{t,x}(\vec{\alpha}) \geq J^i_{t,x}(\vec{\alpha}^{-i},\beta), 
\end{align*}
where $(\vec{\alpha}^{-i},\beta) \coloneqq (\alpha^1,...\alpha^{i-1},\beta,\alpha^{i+1},...,\alpha^n) \in \sA$. 
\end{definition}
Our approach to producing Nash equilibria will be through an appropriate FBSDE system, which we describe here. We define for each $i$ the (reduced) Hamiltonian $H^i : [0,T] \times \R^d  \times \R^d \times A \to \R$ by
\begin{align*}
    H^i(t,x,p^i,a^1,...,a^n) = b(t,x,a^1,...,a^n) \cdot p^i + r^i(t,x,a^1,...,a^n). 
\end{align*}
We assume that the \textbf{generalized Isaacs condition} holds, i.e. there exist measurable functions $\hat{a}^i : [0,T] \times \R^d \times (\R^d)^n \to A^i$ such that for each $x,p \in \R^d$, 
\begin{align} \label{isaacs}
    H^i(t,x,p^i,\hat{a}(t,x,p)) = \sup_{a \in A^i} H^i(t,x,p^i,(\hat{a}^{-i}(t,x,p),a)),
\end{align}
where we write $p = (p^1,...,p^n) \in (\R^d)^n$, $\hat{a}(t,x,p) = (\hat{a}^1,...,\hat{a}^n)(t,x,p)$ and $(\hat{a}^{-i},a) = (\hat{a}^1,...,\hat{a}^{i-1}, a, \hat{a}^{i+1},...,\hat{a}^n)$. Then, we pose the following FBSDE
\begin{align} \label{valuefbsde}
    \begin{cases}
    dX_t = b(t,X_t, \hat{a}(t,X_t,  Z_t \sigma^{-1}(t,X_t)))dt + \sigma(t,X_t) dB_t, \\
    dY_t = - r(t,X_t, \hat{a}(t,X_t, Z_t\sigma^{-1}(t,X_t))) dt + Z_t  dB_t, \, \, Y_T = g(X_T), 
    \end{cases}
\end{align}
along with the HJB PDE system
\begin{align} \label{hjbsystemgen}
    \partial_t u^{i} +  \tr(a D^2 u^i) + H^i(t,x,D u^i, \hat{a}(t,x,Du)) = 0, \, \, u(T,x) = g(x).
\end{align}
Because of the connection between the HJB system \eqref{hjbsystemgen} and the FBSDE \eqref{valuefbsde}, and the well known connection between \eqref{hjbsystemgen} and Markovian Nash equilibria (see e.g. Section 6.3 of \cite{carmona2018probabilistic}), we expect that if $(u,v)$ is a decoupling field of \eqref{valuefbsde}, then $\alpha^*(t,x) \coloneqq \hat{a}(t,x,\sigma^{-1}(t,x)v(t,x))$ is a MNE for the game. In particular, if $u$ is a classical solution to \eqref{pde}, then we expect that $\hat{a}(t,x) \coloneqq \hat{a}(t,x,Du(t,x))$ is a MNE. To make this precise, we impose some mild conditions on the data. 

\be \label{hyp.g} \tag{$H_G$}
\begin{cases}
\text{The generalized Isaacs condition holds with optimizer $\hat{a}$}, \\ \text{the map $\sigma$ satisfies the conditions appearing in \ref{hyp.sigmab}, $g$ is bounded and the estimates} \\
\hspace{.5cm} 1) \,\, |r(t,x,a)| \leq C_G(1 + |a|^2), \\ 
\hspace{.5cm} 2) \,\, |\hat{a}(t,x,p)| \leq C_G(1 + |p|)
\\
\hspace{.5cm} 3) \,\, |b(t,x,a)| \leq C_G(1 + |a|) \\
\text{hold for all } (t,x,a) \in [0,T] \times \R^d \times A, \,\, p \in (\R^d)^n.
\end{cases}
\ee

The following is a verification result, stated in terms of the FBSDE \eqref{valuefbsde} instead of the PDE \eqref{hjbsystemgen}.  

\begin{proposition} \label{pro:fbsdeclne}
Suppose that \ref{hyp.g} holds, and that \eqref{valuefbsde} has a decoupling field $(u,v)$ with $v$ bounded. Then $\vec{\alpha}(t,x) \coloneqq \hat{a}(t,x,v(t,x)\sigma^{-1}(t,x))$ is a MNE for the game with data $(A^i,b,\sigma,r,g)$.
\end{proposition}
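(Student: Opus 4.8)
The plan is to fix a player $i$ and an arbitrary deviation $\beta \in \sA^i$, and show that the expected payoff under the proposed equilibrium $\vec{\alpha}$ dominates the payoff under the profile $(\vec{\alpha}^{-i}, \beta)$. The natural tool is to interpret the value $u^i(t,x) = Y_t^{i,t,x}$ as player $i$'s equilibrium payoff, and then to run a verification argument at the level of the BSDE/FBSDE rather than the PDE. Concretely, I would fix $(t,x)$, let $X^\beta$ denote the state process driven by the profile $(\vec{\alpha}^{-i}, \beta)$ starting from $x$ at time $t$, and compare the process $s \mapsto u^i(s, X^\beta_s)$ against the running and terminal payoffs accumulated along $X^\beta$.

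\textbf{Key steps.} First I would record that, since $(u,v)$ is a decoupling field for \eqref{valuefbsde} with $v = \sigma Du$ (or, at the level of generality of the statement, with $v$ bounded), applying It\^o's formula to $u^i(s,X^\beta_s)$ and using that $u$ solves the HJB PDE \eqref{hjbsystemgen} (at least in the weak/Markovian-BSDE sense provided by the decoupling field) yields
\begin{align*}
    du^i(s,X^\beta_s) = \big(-H^i(s,X^\beta_s, Du^i, \hat{a}(s,X^\beta_s,Du)) + Du^i \cdot b(s,X^\beta_s, \vec{\alpha}^{-i},\beta)\big) ds + Du^i \sigma\, dB_s.
\end{align*}
Here the drift combines the PDE term with the extra transport term coming from the fact that $X^\beta$ is driven by the deviating profile rather than by $\hat a(\cdot, Du)$. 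Second, I would rewrite the bracketed drift using the definition of the Hamiltonian $H^i(t,x,p^i,a) = b\cdot p^i + r^i$: the transport term $Du^i \cdot b(s, X^\beta_s, \vec\alpha^{-i},\beta)$ together with $-H^i$ collapses to
\begin{align*}
    -r^i(s,X^\beta_s, \vec\alpha^{-i},\beta) + \big(H^i(s,X^\beta_s, Du^i, (\hat a^{-i},\beta)) - H^i(s,X^\beta_s, Du^i, \hat a)\big),
\end{align*}
and the generalized Isaacs condition \eqref{isaacs} forces the second bracket to be $\le 0$, since $\hat a$ maximizes $H^i$ over player $i$'s coordinate. Thus
\begin{align*}
    du^i(s,X^\beta_s) \le -r^i(s,X^\beta_s, \vec\alpha^{-i}_s,\beta_s)\, ds + Du^i \sigma\, dB_s.
\end{align*}
Third, I would integrate from $t$ to $T$, use the terminal condition $u^i(T,\cdot) = g^i$, and take expectations; the stochastic integral must vanish, which is where the $\bmo$/boundedness of $v$ and the integrability afforded by \ref{hyp.g} are used to justify that $Du^i\sigma$ is a genuine martingale integrand (a localization plus uniform integrability argument, leaning on $g$ bounded and the quadratic cost bound $|r|\le C_G(1+|a|^2)$). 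This gives $u^i(t,x) \ge J^i_{t,x}(\vec\alpha^{-i},\beta)$. Finally, taking $\beta = \alpha^i$ turns every inequality into an equality (the Isaacs gap closes and the It\^o computation reproduces exactly the backward equation in \eqref{valuefbsde}), so $u^i(t,x) = J^i_{t,x}(\vec\alpha)$, and combining the two yields the Nash inequality.

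\textbf{Main obstacle.} The delicate point is the integrability/martingale justification in the third step. Under a mere deviation $\beta \in \sA^i$ (bounded measurable) the controlled drift $b(s,X^\beta_s,\vec\alpha^{-i},\beta)$ is bounded by \ref{hyp.g}, but one must ensure the state SDE \eqref{stateeqn} is well-posed for the deviating profile and that the local martingale $\int Du^i\sigma\,dB$ is a true martingale (or at least a supermartingale in the right direction) so that the expectation of the stochastic integral drops out without introducing a spurious sign. Since $v$ is only bounded (and $Du$ may blow up near $T$ like $(T-s)^{-1/2-\epsilon}$ per \eqref{vest} when $g$ is merely H\"older), the cleanest route is to assume $g$ Lipschitz so $Du$ is bounded, or to argue via a stopping-time localization together with the $\bmo$ property of the equilibrium $Z$ and a Girsanov change of measure paralleling Proposition \ref{prop:basic}; I expect the bulk of the careful work to live in making this step rigorous uniformly over all admissible deviations $\beta$.
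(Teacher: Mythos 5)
Your overall skeleton---identify $u^i(t,x)$ with the equilibrium payoff, represent the deviation payoff, and use the Isaacs condition \eqref{isaacs} to get the sign of the drift difference---matches the paper's, but your central step is not available under the stated hypotheses. You apply It\^o's formula to $u^i(s,X^\beta_s)$ and invoke the HJB system \eqref{hjbsystemgen}. The proposition only assumes that $(u,v)$ is a decoupling field of \eqref{valuefbsde} with $v$ bounded; by Definition \ref{def.decoup} this means $u$ is bounded and continuous, $v$ is merely measurable, and the FBSDE relations hold along the state process driven by the \emph{equilibrium} drift. Nothing gives you $u\in C^{1,2}$, or even that $u$ solves \eqref{hjbsystemgen} in a sense strong enough for an It\^o (or It\^o--Krylov) expansion along the deviated process $X^\beta$; the remark following Theorem \ref{prop:clneexist} explicitly flags that relating decoupling fields to (weak) PDE solutions requires extra machinery that the paper deliberately avoids. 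So the parenthetical ``at least in the weak/Markovian-BSDE sense provided by the decoupling field'' does not license your first display: the decoupling field property is a statement about one specific diffusion, while your argument needs the semimartingale decomposition of $u^i$ composed with a \emph{different} diffusion. Your worry about $Du$ blowing up near $T$ (and the suggestion to assume $g$ Lipschitz) is likewise off target, since under the actual hypotheses $Du$ need not exist at all.

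The paper fills exactly this hole with the device you mention only as a fallback for the martingale step: the Girsanov argument of Proposition \ref{prop:basic}. One rewrites the backward equation along the equilibrium process with $Z_s\,dB_s$ replaced by $Z_s\,\sigma^{-1}(s,X_s)\,dX_s$ minus the corresponding drift correction, i.e.\ as a functional of the state path alone; since the law of the deviated process $X^\beta$ is equivalent to the law of the equilibrium process (both are Girsanov transforms of the driftless SDE, using \ref{hyp.g}, the boundedness of $v$, and the boundedness of $\beta\in\sA^i$), the same path-functional identity holds along $X^\beta$. Expanding $dX^\beta$ back into drift plus $\sigma\,dB$ then yields the dynamics of $u^i(s,X^\beta_s)$ under $\bP$ with no differentiation of $u$ whatsoever. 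From there the comparison proceeds as you describe: the paper represents the deviation payoff by a linear BSDE (well posed because $r$ along $X^\beta$ is in $L^2$ by \ref{hyp.g}), subtracts, and the Isaacs condition makes the drift of the difference nonnegative; boundedness of $v$ makes the relevant stochastic integrals true martingales, so no delicate localization is needed. Note also that the equality $u^i(t,x)=J^i_{t,x}(\vec\alpha)$ comes directly from the decoupling field definition (the driver of \eqref{valuefbsde} composed with the equilibrium state process is exactly the running reward at the equilibrium controls), again without any It\^o computation.
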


\begin{proof} We will show that $\vec{\alpha}$ is a closed loop Nash equilibrium in three steps. \newline 
\textit{Step 1:} We first establish that $u^i(t,x) = J^i_{t,x}(\vec{\alpha})$. Indeed, notice that if $X$ solves 
\begin{align*}
    d\tilde{X}_s = b(s,\tilde{X}_s, (\vec{\alpha}(s,\tilde{X}_s)) ds + \sigma(s,\tilde{X}_s)  dB_s, \, \, \tilde{X}_t = x
\end{align*}
on $[t,T]$, and $(\tilde{Y},\tilde{Z}) = (u(\cdot, \tilde{X}),v(\cdot, \tilde{X}))$, then we have  
\begin{align*}
    \tilde{Y}_s = g(\tilde{X}_T) + \int_s^T r(r,\tilde{X}_r, \vec{\alpha}(r,\tilde{X}_r)) dr - \int_s^T \tilde{Z_r}  dB_r, 
\end{align*}
and in particular $u^i(t,x) = \tilde{Y}^i_t = J^i_{t,x}(\vec{\alpha})$. 
\newline
\textit{Step 2:} Fix $(t,x) \in [0,T] \times \R^d$, and choose $\beta \in A$ such that $(\vec{\alpha}^{-i}, \beta) \in \sA$. The second step is to construct a BSDE representation of $J_{t,x}(\vec{\alpha}^{-i},\beta)$. Denote by $X$ the solution on $[t,T]$ to the equation $dX_s = b(s,X_s, (\vec{\alpha}^{-i}, \beta)(s,X_s)) ds + \sigma(s,X_s)  dB_s$ with initial condition $X_t = x$.
We now introduce the BSDE 
\begin{align} \label{comparebsde}
    Y'_s = g(X_T) + \int_s^T r(u,X_u, (\vec{\alpha}^{-i},\beta)(u,X_u)) du - \int_{s}^T Z'_u dB_u. 
\end{align}
Under \ref{hyp.g}, $r(\cdot, X, (\vec{\alpha}^{-i}, \beta)(\cdot,X)) \in L^2([0,T] \times \Omega)$, so \eqref{comparebsde} has a unique solution $(Y',Z')$, which clearly satisfies $Y^{'i}_t = J^i_{t,x}(\vec{\alpha}^{-i}, \beta)$. 
\newline
\textit{Step 3:} Having established the identities $J_{t,x}^i(\vec{\alpha}) = u^i(t,x)$, $J_{t,x}^i(\vec{\alpha}^{-i},\beta) = Y^{'i}_t$, we now complete the proof by showing that 
$
    u^i(t,x) \geq Y'_t. 
$
To do this, we define $Y = u(\cdot, X)$, $Z = v(\cdot, X)$. Under \ref{hyp.g}, we see that we can write 
$
    dX_s = \sigma(s,X_s) d\tilde{B}_s, 
$
where 
$
    \tilde{B} = B - \int b(\cdot, X, (\vec{\alpha}^{-i}, \beta)(\cdot, X) ) \sigma^{-1}(\cdot, X) ds
$
and $\tilde{B}$ is a Brownian motion under an equivalent probability measure. 
By virtue of the fact that $(u,v)$ is a decoupling field for \eqref{valuefbsde}, we get that (following the computations in the proof of Proposition \ref{prop:basic}, and writing $\hat{a}$ as a shortcut for $\hat{a}(\cdot,X, Z \sigma^{-1}(\cdot,X))$ for brevity),
\begin{align*}
    Y_s^i
    &= g^i(X_T) + \int_s^T  H^i(u,X_u,\sigma^{-1}(u,X_u) Z_u^i , \hat{a}) dr - \int_{s}^T Z_u^i \cdot \big( \sigma^{-1}(u,X_u) dX_u \big)\\
    &= g^i(X_T) + \int_s^T \Big(H^i(u,X_u, \sigma^{-1}(u,X_u)Z_u^i , \hat{a}) \\ &- (\sigma^{-1}(u,X_u)Z^i_u) \cdot b(u,X_u, (\vec{\alpha}^{-i}, \beta)(u,X_u))\Big) ds  - \int_s^T Z^i_u dB_u. 
\end{align*}
Thus, setting $\Delta Y = Y - Y'$, $\Delta Z = Z - Z'$, we have 
\begin{align*}
    \Delta Y_s^i = \int_s^T \big(H^i(u,X_u, \sigma^{-1}(u,X_u)Z_u^i , \hat{a}) - H^i(u,X_u,\sigma^{-1}(u,X_u)Z_u^i,(\vec{\alpha}^{-i},\beta)(u,X_u)\big)dr - \int_s^T \Delta Z_u^i dB_u
\end{align*}
Since $
    H^i(u,X_u, \sigma^{-1}(u,X_u) Z_u^i , \hat{a}) - H^i(u,X_u,\sigma^{-1}(u,X_u)Z_r^i,(\vec{\alpha}^{-i},\beta)(u,X_u)) \geq 0$, 
we conclude that $J^i_{t,x}(\vec{\alpha}) = u^i(t,x) =  Y^i_t \geq Y^{'i}_t = J^i_{t,x}(\vec{\alpha}^{-i},\beta)$.
\end{proof}

\subsection{Games with diagonal cost structures and additive drift}

We now describe a general class of games to which our results on FBSDEs can be applied. We assume that the dynamics take the form 
\begin{align*}
    dX_t = \big( \sum_{j = 1}^n b^j(t,X_t,\alpha_t^j) \big) dt + \sigma(t,X_t) dB_t
\end{align*}
while the payoff for player $i$ takes the form 
\begin{align*}
 J^i_{t,x}(\vec{\alpha}) = \bE[g^i(X_T^{t,x}) + \int_t^T r^i (s,X^{t,x}_s,\alpha^i(s,X_s^{t,x})) dt].
\end{align*}
Player $i$'s Hamiltonian in this case is given by
\begin{align*}
    H^i(t,x,p^i,a^1,...,a^n) = \big(\sum_j b^j(t,x,a^j) \big) \cdot p^i + r^i(t,x,a^i). 
\end{align*}
In particular, the Isaacs condition holds as soon as there exists for each $i$ a measurable map $\hat{a}^i = \hat{a}^i(t,x,p^i) : [0,T] \times \R^d \times \R^d \to A^i$ such that 
\begin{align} \label{aidef}
    b^i(t,x,\hat{a}^i(t,x,p^i)) \cdot p^i + r^i(t,x,a^i) = \sup_a \big( b^i(t,x,\hat{a}^i(t,x,p^i)) \cdot p^i + r^i(t,x,a^i)\big)
\end{align}
for each $(t,x,a^i)$. 
We note that in terms of the notation introduced in the previous subsection, we have $b(t,x,a) = \sum_j b^j(t,x,a^j)$, $r^i(t,x,a) = r^i(t,x,a^j)$. 
Let us list the necessary assumptions on the data.
\be 
\begin{cases} \label{hyp.diag} \tag{$H_{\text{diag}}$}
    \text{The functions $b^i, \sigma, r^i, g^i$ are all continuous, $\sigma$ satisfies the conditions in \ref{hyp.sigmab}} \\
    \text{and there is a constant $\cdiag$ such that the estimates }\\
    \hspace{.5cm} 1) |b^i(t,x,a^i)| \leq \cdiag(1 + |a^i|) \\
    \hspace{.5cm} 2)  |g^i(x)| \leq \cdiag, \quad |g^i(x) - g^i(x')| \leq \cdiag |x - x'| \\
    \hspace{.5cm} 3) |r^i(t,x,a^i)| \leq \cdiag (1+ |a^i|^2) \\
    \text{hold for all $x,x' \in \R^d$, $t \in [0,T]$, $a^i \in A^i$. Moreover there exist continuous functions} \\ \text{$\hat{a}^i$ satisfying \eqref{aidef}, \text{and such that}} \\
    \hspace{.5cm} 4) |\hat{a}^i(t,x,p^i)| \leq \cdiag(1 + |p^i|). 
\end{cases}
\ee 

Note that if \ref{hyp.diag} holds, the FBSDE \eqref{valuefbsde} becomes  
\begin{align} \label{diagfbsde}
\begin{cases}
    dX_t = \big(\sum_j b^j(t,X_t,\hat{a}^j(t,X_t,\sigma^{-1}(t,X_t) Z^j )) \big) dt + \sigma(t,X_t)  dB_t, \\
    dY_t^i = -\big( r^i(t,X_t,\hat{a}^i(t,X_t,\sigma^{-1}(t,X_t)Z^i )) \big) dt + Z_t^i \cdot dB_t^i, \, \, Y_T = g(X_T). 
\end{cases}
\end{align}
\begin{theorem} \label{prop:clneexist}
Suppose that \ref{hyp.diag} holds. Then the FBSDE \eqref{diagfbsde} has a decoupling field $(u,v)$ with $v$ bounded. Consequently, $\vec{a}(t,x) = \hat{a}(t,x,\sigma^{-1}(t,x)v(t,x))$ is a MNE for the game with data $(A^i,b,\sigma, r,g)$.
\end{theorem}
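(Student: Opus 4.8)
The plan is to verify that the game data $(A^i, b, \sigma, r, g)$ described by \ref{hyp.diag} falls into the framework of Theorem \ref{thm.exist}, apply that theorem to produce a decoupling field for the FBSDE \eqref{diagfbsde}, and then conclude via Proposition \ref{pro:fbsdeclne}. The essential content is the translation: I must show that the coefficients of \eqref{diagfbsde}, read as an instance of the general FBSDE \eqref{fbsde}, satisfy the structural conditions \ref{hyp.sigmab}, \ref{hyp.ab}, and \ref{hyp.bf}, together with the continuity and terminal-data regularity required by Theorem \ref{thm.exist}.

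First I would set up the dictionary. Reading \eqref{diagfbsde} against \eqref{fbsde}, the drift is $b(t,x,y,z) = \sum_j b^j(t,x,\hat{a}^j(t,x,\sigma^{-1}(t,x)z^j))$ and the driver is $f^i(t,x,y,z) = r^i(t,x,\hat{a}^i(t,x,\sigma^{-1}(t,x)z^i))$; note that neither depends on $y$. Checking \ref{hyp.sigmab} is immediate: parts 1) and 2) are inherited directly from the hypothesis on $\sigma$, while part 3), the bound $|b(t,x,y,z)| \leq C_0(1+|z|)$, follows by composing estimate 1) of \ref{hyp.diag} with estimate 4) and the boundedness of $\sigma^{-1}$, namely $|b^j(t,x,\hat{a}^j(\cdots))| \leq \cdiag(1+|\hat{a}^j|) \leq \cdiag(1 + \cdiag(1+|\sigma^{-1}z^j|))$, which is linear in $|z|$. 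The quadratic/Bensoussan–Frehse condition \ref{hyp.bf} is the key structural payoff of the diagonal cost assumption: because player $i$'s running cost depends only on $a^i$, which through the Isaacs optimizer depends only on $z^i$, estimate 3) of \ref{hyp.diag} combined with estimate 4) gives $|f^i(t,x,y,z)| \leq \cdiag(1 + |\hat{a}^i(t,x,\sigma^{-1}z^i)|^2) \leq C(1 + |z^i|^2)$. This is in fact stronger than \ref{hyp.bf}: each $f^i$ is controlled by $|z^i|^2$ alone, so a fortiori it is bounded by $C_Q(1 + |z^i||z| + \sum_{j<i}|z^j|^2 + \kappa(|z|))$ for a suitable sub-quadratic $\kappa$ (indeed with $\kappa \equiv 0$).

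The remaining condition, \ref{hyp.ab}, requires a finite collection of vectors positively spanning $\R^n$ with $a_m^T f(t,x,y,z) \leq \rho + \tfrac{1}{2}|a_m^T z|^2$. Here I would exploit the diagonal structure once more. Taking the standard basis directions $\pm e_i$ for $i = 1,\dots,n$ (which positively span $\R^n$), I must bound $\pm f^i(t,x,y,z) \leq \rho + \tfrac{1}{2}|z^i|^2$. From the quadratic-in-$z^i$ bound just obtained, $|f^i| \leq C(1 + |z^i|^2)$, which is \emph{not} directly of the required form because of the factor $C$ on $|z^i|^2$ rather than $\tfrac{1}{2}$. This mismatch is the one genuine subtlety, and I expect it to be the main obstacle: the boundedness $\norm{u}_{\linf}$ estimate in Lemma \ref{lem.ab} needs the coefficient exactly $\tfrac12$. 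I would resolve it by noting that since $g$ is bounded (estimate 2) of \ref{hyp.diag}) and $f^i$ depends only on $(t,x,z^i)$ with the stated quadratic bound, one obtains the needed $\linf$ bound on the decoupling field directly rather than through \ref{hyp.ab} verbatim — this is exactly the flexibility flagged in the introduction, where it is emphasized that \ref{hyp.ab} is merely a convenient sufficient condition for the $\linf$ a-priori estimate, and that \emph{if such estimates are established by another method the rest of the analysis goes through unchanged}. Concretely, applying the submartingale argument of Lemma \ref{lem.ab} to $\exp(2C a_m^T Y + \cdots)$ with the correct constant absorbed, or invoking a standard comparison with the bounded terminal data, yields $\norm{u}_{\linf} \leq C$.

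Finally, the continuity of $b^i, \sigma, r^i, g^i$ and of $\hat{a}^i$ assumed in \ref{hyp.diag} makes $b$ and $f$ continuous, and $g \in \cbeta$ holds (with $\beta = 1$) since each $g^i$ is Lipschitz by estimate 2). Thus Theorem \ref{thm.exist} applies and produces a $\bmo$ decoupling field $(u,v)$ for \eqref{diagfbsde}; moreover, because $g$ is Lipschitz, the final clause of Theorem \ref{thm.exist} guarantees $Du$, and hence $v = \sigma Du$, is bounded on all of $[0,T] \times \R^d$. With $v$ bounded and \ref{hyp.g} verified (its estimates 1)–3) follow from estimates 3), 4), 1) of \ref{hyp.diag} respectively, using boundedness of $\sigma^{-1}$), Proposition \ref{pro:fbsdeclne} then yields that $\vec{a}(t,x) = \hat{a}(t,x,\sigma^{-1}(t,x)v(t,x))$ is a MNE, completing the proof.
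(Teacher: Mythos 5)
Your overall strategy is exactly the paper's: read off $b(t,x,z)$ and $f^i(t,x,z)$ from \eqref{diagfbsde}, verify the hypotheses of Theorem \ref{thm.exist}, and conclude via Proposition \ref{pro:fbsdeclne}; your checks of \ref{hyp.sigmab}, \ref{hyp.bf}, the continuity and regularity of the data, the boundedness of $v$ via the Lipschitz clause of Theorem \ref{thm.exist}, and the verification of \ref{hyp.g} all match. The one place you diverge is the step the paper singles out as the only non-obvious one, namely \ref{hyp.ab}, and there you have made it harder than it is. The perceived ``mismatch'' between the constant $C$ in $|f^i(t,x,z)| \leq C(1+|z^i|^2)$ and the coefficient $\tfrac{1}{2}$ in \ref{hyp.ab} is not an obstacle, because \ref{hyp.ab} is not scale-invariant in the vectors $a_m$: the left side $a_m^T f$ is linear in $a_m$ while the right side $\tfrac{1}{2}|a_m^T z|^2$ is quadratic. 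The paper exploits this by taking $\{a_m\} = \{\pm \lambda e_m\}_{m=1}^n$ with $\lambda$ a large constant and $\rho = \lambda$ large (positive spanning is preserved under positive rescaling); then $\pm \lambda f^i \leq \lambda C(1 + |z^i|^2) \leq \rho + \tfrac{1}{2}\lambda^2 |z^i|^2$ as soon as $\lambda \geq 2C$ and $\rho \geq \lambda C$, so \ref{hyp.ab} holds \emph{verbatim} and Theorem \ref{thm.exist} applies as a black box, with no appeal to the introduction's ``estimates by other methods'' flexibility. Note that your first proposed fix --- running the submartingale argument of Lemma \ref{lem.ab} with exponent $2C a_m^T Y$ --- is literally this same trick, since $\exp(2C a_m^T Y) = \exp(2 (C a_m)^T Y)$; had you pushed it one step further you would have recognized that the rescaled vectors satisfy \ref{hyp.ab} itself, sparing you from reopening the proof of Theorem \ref{thm.exist}. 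Your fallback suggestion of ``invoking a standard comparison with the bounded terminal data'' should be dropped: comparison principles are generally unavailable for BSDE \emph{systems}, and that sentence is the only genuinely unsupported step in your write-up. With the scaling observation in place of that discussion, your proof coincides with the paper's.
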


\begin{remark}
It is natural to ask whether the equilibrium we produce is unique. If we only impose \ref{hyp.g}, we cannot expect uniqueness, in short because we cannot guarantee uniqueness of the FBSDE \eqref{diagfbsde} (or of the corresponding PDE) without additional regularity conditions. Nevertheless, under appropriate technical conditions one can guarantee a one-to-one correspondence between Markovan Nash equilibria and certain generalized solutions of the HJB system by following the arguments in Proposition 6.27 in \cite{carmona2018probabilistic}. This gives one way to check that if $(u,v)$ is a decoupling field for \eqref{diagfbsde} with $v$ bounded, then $u$ must in fact solve the corresponding PDE in an appropriate sense. To make this rigorous requires a discussion of weak solutions for the PDE system \eqref{pde}, regularity properties of scalar Hamilton-Jacobi equations with irregular Hamiltonians and the It\^o Krylov formula. We do not pursue this analysis for the sake of brevity. 
\end{remark}

\begin{proof}
This is a matter of checking that if \ref{hyp.diag} holds, then the functions $b, \sigma, f, g$ with 
\begin{align*}
    b(t,x,z) = \sum_j b^j(t,x,\hat{a}^j(t,x, \sigma^{-1}(t,x) z^j), \quad 
    f^i(t,x,z) = r^i(t,x, \hat{a}^i(t,x, \sigma^{-1}(t,x) z^i))
\end{align*}
satisfy the conditions of Theorem \ref{thm.exist}. The only thing which is not obvious is \ref{hyp.ab}. For this, we note that we can easily check $
     |f^i(t,x,z)| \leqc 1 + |z^i|^2$,
 which implies that the condition (AB) is satisfied, with $\{a_m\} = \{\pm \lambda e_m\}_{m = 1}^n$, $\rho = \lambda$, where $\lambda$ is a large enough positive constant and $e_m$ is the $m^{th}$ standard basis vector in $\R^n$. 
\end{proof}

\bibliographystyle{amsalpha}
\bibliography{fbsde}

\end{document}

Next, we treat in strong formulation a game which was solved in weak formulation in \cite{xing2018} via BSDEs. In this game, players 1 and 2 choose controls $\alpha^1$ and $\alpha^2$ taking values in $\R^d$, which impact the state process $X$ through the dynamics 
\begin{align} \label{dynamics2}
    X_t = x + \int_0^t \big(\alpha^1_s + \alpha^2_s + b(s,X_s)\big) ds + B_t. 
\end{align}
The payoffs to player $i$ is given by
\begin{align*}
    J_i(\alpha^1,\alpha^2) = \bE[g^i(X_T) + \int_0^T \big(-\frac{1}{2} |\alpha^i_t|^2 + \theta \alpha^1 \cdot \alpha^2 + r^i(t,X_t) dt\big)]. 
\end{align*}
Here are our assumptions on the data:
\begin{assumption} \label{assump:game2}
The function $b = b(t,x) : [0,T] \times \R^d \times \R^d$ is $C^1$ and Lipschitz in $x$. The functions $g^i$ are $C^{2,\alpha}$, and $r^i = r^i(t,x): [0,T] \times \R^d \to \R$ is $C^1$ and Lipschitz in $x$. Finally, the constant $\theta$ is a real number not equal to $1$ or $-1$. 
\end{assumption}
Player $i$'s (reduced) Hamiltonian is given by
\begin{align*}
    H^i(t,x,p^i,a^1,a^2) = \frac{1}{2} \text{tr}(q^i) + \big(a^1 + a^2 + b(t,x)\big) \cdot p^i - \frac{1}{2} |a^i|^2 + \theta a^1 \cdot a^2 + r^i(t,x),
\end{align*}
and the optimizers (when $\theta \neq 1,-1$) are 
\begin{align*}
    \hat{a}^i(t,x,p) = \hat{a}^i(p) =  - \frac{\theta}{(1 + \theta)(1 - \theta)}(p^1 + p^2) + \frac{1}{1- \theta} p^i.
\end{align*}
Thus the Hamilton-Jacobi-Bellman system of PDEs is 
\begin{align} \label{hjb2}
    \partial_tu^i + \frac{1}{2} \Delta u^i + H^i(t,x,\nabla u^i, \hat{a}^1(Du), \hat{a}^2(Du)) = 0, u(T,x) = g(x).
\end{align}
Here is the solvability result for this game, which follows easily from the fact that 
\begin{proposition}
Under assumption \ref{assump:game2}, the HJB system \eqref{hjb2} has a classical solution $u$. Moreover, the feedback functions
\begin{align*}
    \hat{a}^{i}(t,x) \coloneqq \hat{a}^i(Du(t,x)), \, \, i = 1,2
\end{align*}
define a closed-loop Nash equilibrium for the game. 
\end{proposition}